\newtheorem{theorem}{Theorem}[section]
\newtheorem*{theorem1}{Theorem 1.2}
\newtheorem{assumption}[theorem]{Assumption}
\newtheorem{definition}[theorem]{Definition}
\newtheorem{proposition}[theorem]{Proposition}
\newtheorem{lemma}[theorem]{Lemma}
\newtheorem{remark}[theorem]{Remark}
\newtheorem{conjecture}[theorem]{Conjecture}
\newtheorem{fact}[theorem]{Fact}
\newcommand{\K}{\mathcal{K}}
\newcommand{\Union}{\bigcup}
\DeclareMathOperator{\tp}{ga-tp}
\DeclareMathOperator{\Aut}{Aut}
\DeclareMathOperator{\cf}{cf}
\newcommand{\gaS}{\operatorname{ga-S}}
\DeclareMathOperator{\LS}{LS}
\newcommand{\T}{\mathcal{T}}
\newcommand{\C}{\mathfrak{C}}
\newcommand{\St}{\operatorname{\mathfrak{S}t}}
\newcommand{\preck}{\prec_\K}
\def\runiv{\rotatebox[origin=c]{-90}{$\prec_{u}$}} 
\newcommand{\footnotei}[1]{}
\newcommand{\comment}[1]{}
\newcommand{\rest}{\upharpoonright}
\title[Limit Models in Strictly Stable AECs]
{Limit Models in Strictly Stable Abstract Elementary Classes}
\keywords{}
\subjclass{MCS 2020: 03C48, 03C45}
\author{Will Boney}
\address{Mathematics Department\\
Texas State University\\
San Marcos, TX 78666}
\email{wb1011@txstate.edu}
\author{Monica M. VanDieren}
\thanks{This material is based upon work while the first author was supported by the National Science Foundation under Grant No. DMS-1402191 and DMS-2137465.}
\date{\today}
\begin{document}

\comment{

	\begin{figure}[h]
\begin{tikzpicture}[scale=2.9,inner sep=.5mm]
\tikzstyle{rrect}=[rounded corners=5mm]
\draw (0,0) [rrect] rectangle (3.5,1);
\draw (0,1)[rrect] rectangle (1.1,0);
\draw (0,1)[rrect] rectangle (2.1,0);
\draw (0,1)[rrect] rectangle (2.55,0);
\draw (0,1)[rrect] rectangle (2.7,0);
\draw (.85,.25) node {$M^{0}_{\gamma+1}$};
\draw (1.75,.25) node {$M^0_{<k}$};
\draw (2.35,.25) node {$M^0_{k}$};
\draw (3,.25) node {$M^{+k}\dots$};
\draw (3.75,.25) node {$M^0_{<\beta}$};
\draw (-.25,.25) node {$\T^{0}$};
\node at (.5,-.25)[circle, fill, draw, label=270:$b$] {};
\draw[line width=.75mm] (0,1) [rrect] rectangle (2.1, -.45);
\draw[line width=.75mm] (0,1) [rrect] rectangle (1.1, -.45);
\draw (0,1) [rrect] rectangle (2.55, -.45);
\draw (2.4,-.6) node {$M*$};
\draw [->, shorten >=3pt] (2.5,-.3) to [bend right=65] node[pos=0.3,below] {$f$}(2.6,.1);
\draw [->, shorten >=3pt] (0,-.3) to [bend right=95] node[pos=0.6,below] {$g\circ f$}(1.5,-.75);
\draw (.85,-.2) node {$M^{2,k}_{\gamma+1}$};
\draw (1.75,-.2) node {$\dots M^{2, k}_{<k}$};
\draw (3.4,-.2) node {$M^b_k$};
\draw (-.25,-.2) node {$\T^{2,k}$};
\draw (-.25,-.75) node {$\T^{2,k+1}$};
\node at (2.95,.75)[circle, fill, draw, label=315:$a_{k}$] {};
\node at (1.2,.75)[circle, fill, draw, label=315:$a_{\gamma+1}$] {};
\begin{scope}
  \clip (0,1) [rrect] rectangle (5,-1);
\draw (0,1.8) [rrect, xslant=-0.4, dotted] rectangle (3.2, -.75); 
\end{scope}

\end{tikzpicture}
\caption{The construction of $\T^2$ (dotted) from $\T^1$ (bold)  and $\T^0$ (normal) with the embedding $f$ in Lemma \ref{tow-ap-lem}. IN PROGRESS DRAWING TO ILLUSTRATE LEMMA, IGNORE FOR NOW} \label{fig:tower lem}
\end{figure}

\newpage}

\begin{abstract}

In this paper, we examine the locality condition for non-splitting and determine the level of uniqueness of limit models that can be recovered in some stable, but not superstable, abstract elementary classes.  In particular we prove:

\begin{theorem1}
Suppose that $\K$ is an abstract elementary class satisfying
\begin{enumerate}
\item the joint embedding and amalgamation properties with no maximal model of cardinality $\mu$. 
\item  stability in $\mu$.
\item $\kappa^*_\mu(\K)<\mu^+$.
\item  continuity for non-$\mu$-splitting (i.e. if $p\in\gaS(M)$ and $M$ is a limit model witnessed by $\langle M_i\mid i<\alpha\rangle$ for some limit ordinal $\alpha<\mu^+$ and there exists $N \prec M_0$ so that $p\restriction M_i$ does not $\mu$-split over $N$ for all $i<\alpha$, then $p$ does not $\mu$-split over $N$).
\end{enumerate}
 For $\theta$ and $\delta$ limit ordinals $<\mu^+$ both with cofinality $\geq\kappa^*_\mu(\K)$,
if $\K$ satisfies symmetry for non-$\mu$-splitting (or just $(\mu,\delta)$-symmetry\footnotei{Track down exactly what symmetry is needed; the referee things that `$(\mu, \lambda)$-symmetry for some $\lambda$' should suffice.}), then, for any $M_1$ and $M_2$ that are $(\mu,\theta)$ and $(\mu,\delta)$-limit models over $M_0$, respectively, we have that $M_1$ and $M_2$ are isomorphic over $M_0$.

\end{theorem1}

Note that no tameness is assumed.

\end{abstract}

\maketitle

\section{Introduction}
Because the main test question for developing a classification theory for abstract elementary classes (AECs) is Shelah's Categoricity Conjecture \cite[Problem D.1]{Ba},
 the development of independence notions for AECs has often started with an assumption of categoricity (\cite{sh576, vas3, V1} and others). Consequently, the independence relations that result are superstable or stronger (see, for instance, good $\lambda$-frames and the superstable prototype \cite[Example II.3.(A)]{shelahaecbook}).  However, little progress has been made to understand stable, but not superstable AECs.  
 A notable exception is the work on $\kappa$-coheir of Boney and Grossberg \cite{bgcoheir}, which only requires stability in the guise of `no weak $\kappa$-order property.'  In this paper, we  add to the understanding of strictly stable AECs with a different approach and under different assumptions than \cite{bgcoheir}.  In particular, our analysis uses towers and the standard definition of Galois-stability.  Moreover, we work without assuming any of the strong locality assumptions (tameness, type shortness, etc.) of \cite{bgcoheir}.  We hope that this work will lead to further exploration in this context.

The main tool in our analysis is a tower, which was first conceived to study superstable AECs (see, for instance \cite{ShVi} or \cite{Va1}).
The `right analogue' of superstability in AECs has been the subject of much research.  Shelah has commented that this notion suffers from `schizophrenia,' where several equivalent concepts in first-order seem to bifurcate into distinct notions in nonelementary settings; see the recent Grossberg and Vasey \cite{grva} for a discussion of the different possibilities (and a suprising proof that they are equivalent under tameness).  

Common to much analysis of superstable AECs is the uniqueness of limit models.  Uniqueness of limit models was first proved to follow from a categoricity assumption in \cite{sh394, Sh:600, ShVi, Va1, Va1e}.  Later,  $\mu$-superstability, which was isolated by Grossberg, VanDieren, and Villaveces \cite[Assumption 2.8(4)]{gvv}, was shown to imply uniqueness of limit models under the additional assumption of $\mu$-symmetry \cite{Va2}.  $\mu$-superstability was modeled on the local character characterization of superstability in first-order and was already known to follow from categoricity \cite{ShVi}.  
The connection between $\mu$-symmetry and structural properties of towers \cite{Va2} 
inspired  recent research on $\mu$-superstable classes: \cite{Va4, VaVa}.  Moreover, years of work culminating in the series of papers \cite{ShVi, Va1, Va1e, Va2, Va4, VaVa} has led to the extraction of a general scheme for proving the uniqueness of limit models (note that amalgamation is generally assumed in these papers, but this is not true of \cite{ShVi, Va1, Va1e}).  
In this paper we witness the power of this new scheme by adapting the technology developed in \cite{Va2} to cover $\mu$-stable, but not $\mu$-superstable classes.  We suspect that this new technology of towers will likely be used to answer other problems in classification theory (in both first order and non-elementary settings).  

This paper focuses on the question to what degree the uniqueness of limit models can be recovered if we assume the class is Galois-stable in $\mu$, but not $\mu$-superstable, by refocusing the question from ``\emph{Are all} $(\mu, \alpha)$-limit models isomorphic (over the base)?'' to ``\emph{For which $\alpha, \beta < \mu^+$ are } $(\mu, \alpha)$-limit models and $(\mu, \beta)$-limit models isomorphic (over the base)?''  Based on first-order results (summarized in \cite[Section 2]{gvv}), we have the following conjecture; note that $\mu$-stability implies that limit models exist so the set below is meaningful.

\begin{conjecture}\label{stab-conj}
Suppose $\K$ is an AEC with $\mu$-amalgamation and is $\mu$-stable.  The set 
$$\left\{ \alpha < \mu^+ : \cf(\alpha)=\alpha\text{ and } (\mu, \alpha)\text{-limit models are isomorphic to }(\mu, \mu)\text{-limit models} \right\}$$
is a non-trivial interval of regular cardinals.  Moreover, the minimum of this set is an important measure of complexity of $\K$, namely it is is $\kappa^*_\mu(\K)$ (see Definition \ref{kappastar-def}).
\end{conjecture}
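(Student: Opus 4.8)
The plan is to follow the tower-based scheme from \cite{Va2}, adapting it to the strictly stable setting where the length of the limit chain matters. First I would recall the basic machinery: a tower is a sequence $\langle M_i, a_i, N_i \mid i < \alpha\rangle$ where the $M_i$ form a $\prec_{\K}$-increasing chain, $a_i \in M_{i+1}\setminus M_i$, and $\tp(a_i/M_i)$ does not $\mu$-split over $N_i \prec M_i$. One defines the continuous-tower orderings $\leq$ and the stronger $\prec_u$ (``relatively full'' or ``universal'' extension at each coordinate), and one shows that relative to a fixed domain one can build $\prec_u$-increasing continuous chains of towers of any length $< \mu^+$. The key structural lemma to re-establish in this context is the \emph{reduced tower / uniqueness of limits} statement: that a $\prec_u$-increasing continuous chain of towers of length $\geq \kappa^*_\mu(\K)$ has a ``continuous'' union in the sense that the types along the union do not split over the original $N_i$'s — this is exactly where hypothesis (4), continuity for non-$\mu$-splitting, is used, together with hypothesis (3), $\kappa^*_\mu(\K) < \mu^+$, to guarantee that the local character number is small enough that a chain of cofinality $\geq \kappa^*_\mu(\K)$ ``catches'' the splitting base.

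The heart of the argument is a back-and-forth between the two limit models. Given $M_1$ a $(\mu,\theta)$-limit over $M_0$ witnessed by $\langle M^1_i \mid i < \theta\rangle$ and $M_2$ a $(\mu,\delta)$-limit over $M_0$ witnessed by $\langle M^2_i \mid i < \delta\rangle$, with $\cf(\theta), \cf(\delta) \geq \kappa^*_\mu(\K)$, I would interleave these chains into a single $\prec_u$-increasing continuous tower and use the symmetry hypothesis (symmetry for non-$\mu$-splitting, or just $(\mu,\delta)$-symmetry) to move a ``relatively full'' tower past a generic element, exactly as in \cite{Va2}, so that at each stage the partial isomorphism between initial segments of $M_1$ and $M_2$ extends. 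The symmetry property is what lets the back-and-forth be symmetric — without it, one can only extend in one direction. The role of the cofinality assumptions $\cf(\theta),\cf(\delta)\geq\kappa^*_\mu(\K)$ is to ensure that, at limit stages of the back-and-forth, the union of the partial maps is still an isomorphism of towers whose types are non-splitting over the relevant bases, which is precisely what continuity (4) delivers. Taking the union of the back-and-forth chain over $\max(\theta,\delta)$ many steps produces an isomorphism $M_1 \cong_{M_0} M_2$.

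The main obstacle I expect is the continuity/limit-stage bookkeeping: in the superstable case of \cite{Va2} one has $\kappa^*_\mu(\K) = \aleph_0$, so limit chains of towers of any infinite length are automatically ``continuous'' in the needed sense, whereas here one must carefully track that every chain one builds in the back-and-forth has cofinality at least $\kappa^*_\mu(\K)$, and that the continuity hypothesis (4) applies at each union — in particular one must verify that the witnessing sub-chains $\langle M_i \mid i < \alpha\rangle$ used in invoking (4) genuinely have $\alpha < \mu^+$ of the right cofinality and that the base model $N$ can be chosen uniformly below $M_0$. A secondary subtlety is checking that $(\mu,\delta)$-symmetry alone (rather than full symmetry for non-$\mu$-splitting) suffices: this requires noting that in the back-and-forth only instances of symmetry involving chains of length related to $\delta$ (and $\theta$) actually arise, so the weaker hypothesis is enough. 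Everything else — existence of $\prec_u$-extensions, the fact that $(\mu,\alpha)$-limit models exist under (1)–(2), basic manipulation of non-$\mu$-splitting (monotonicity, extension over universal models, uniqueness) — is routine adaptation of the standard tower toolkit.
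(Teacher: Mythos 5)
First, note that the statement you were given is a \emph{conjecture}, not a theorem: the paper does not prove it in general, and indeed only establishes it (in the guise of Theorem~\ref{uniqueness theorem}) under the added hypotheses of Assumption~\ref{assm} (local character $\kappa^*_\mu(\K)<\mu^+$, continuity for non-$\mu$-splitting) plus a symmetry hypothesis. You correctly recognized this and aimed your plan at the theorem rather than the conjecture.

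Your plan assembles the right toolkit --- towers, reduced towers, the cutoff $\kappa^*_\mu(\K)$, continuity for non-$\mu$-splitting, symmetry --- but the central structural move you describe is not the one the paper uses, and as stated it has a gap. You propose a direct back-and-forth between the two given limit models $M_1$ and $M_2$, ``interleaving'' the witnessing chains $\langle M^1_i \mid i<\theta\rangle$ and $\langle M^2_i \mid i<\delta\rangle$ so that partial isomorphisms between their initial segments extend, and then taking a union after ``$\max(\theta,\delta)$ many steps.'' This cannot work as described: a partial isomorphism built in $\max(\theta,\delta)<\mu^+$ stages, each handling an initial segment of a chain, does not exhaust models of size $\mu$, and it is unclear how to interleave chains of unequal lengths $\theta\neq\delta$ while preserving that the two sides remain witnessing chains. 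The paper avoids this entirely with an \emph{indirect} argument: since any two $(\mu,\theta)$-limits over $M$ are isomorphic over $M$ (and likewise for $(\mu,\delta)$-limits), it suffices to build a \emph{single} model that is simultaneously a $(\mu,\theta)$-limit and a $(\mu,\delta)$-limit over $M$. This is done by constructing a $(\delta+1)$-tall chain of towers $\langle\T^\beta\mid\beta\le\delta\rangle$ with increasingly dense index sets of cofinality $\theta+1$, taking reduced extensions at each successor stage so that each tower realizes all strong types over the previous row's models. Theorem~\ref{reduced are continuous} (the key lemma you alluded to but did not place in this role) guarantees that each $\T^\beta$, being reduced, is continuous at $\theta$, so $M^\delta_{i_\theta}=\bigcup_{k<\theta}M^\delta_{i_k}$; the column $\langle M^\beta_{i_\theta}\mid\beta<\delta\rangle$ exhibits this as a $(\mu,\delta)$-limit, while relative fullness of $\T^\delta$ (your ``relatively full tower'' appears in your sketch but not in its actual role) together with Proposition~\ref{relatively full is limit} exhibits the same model as a $(\mu,\theta)$-limit. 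So the paper's proof is a two-dimensional array construction producing a single amphibious model, not a back-and-forth pairing the two given models; the reduction ``it is enough to build one model of both kinds'' is the missing idea in your proposal.
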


Our main result (restated from the abstract)\footnotei{WB: The theorem1 environment is now hardcoded to be `Theorem 1.2.' If the label here changes, we should change that environment}  proves this conjecture under certain assumptions.

\begin{theorem}\label{uniqueness theorem}
Suppose that $\K$ is an abstract elementary class satisfying
\begin{enumerate}
\item the joint embedding and amalgamation properties with no maximal model of cardinality $\mu$. 
\item  stabilty in $\mu$.
\item $\kappa^*_\mu(\K)<\mu^+$.
\item  continuity for non-$\mu$-splitting (i.e. given a model $N$, a $(\mu, \alpha)$-limit model $M$ witnessed by $\langle M_i\mid i<\alpha\rangle$, and $p\in\gaS(M)$, if $p\restriction M_i$ does not $\mu$-split over $N$ for all $i<\alpha$, then $p$ does not $\mu$-split over $N$).
\end{enumerate}
 For $\theta$ and $\delta$ limit ordinals $<\mu^+$ both with cofinality $\geq\kappa^*_\mu(\K)$,
if $\K$ satisfies symmetry for non-$\mu$-splitting (or just $(\mu,\delta)$-symmetry), then, for any $M_1$ and $M_2$ that are $(\mu,\theta)$ and $(\mu,\delta)$-limit models over $M_0$, respectively, we have that $M_1$ and $M_2$ are isomorphic over $M_0$.

\end{theorem}

Assumption \ref{assm} collects these assumptions together, and we discuss them following that statement.   In this statement, the ``measure of complexity''  from Conjecture \ref{stab-conj} is $\kappa^*_\mu(\K)$, a generalization of the first-order $\kappa(T)$ (see Definition \ref{kappastar-def}).  An important feature of this work is that it explores the underdeveloped field of strictly stable AECs.  

We end with a short comment contextualizing this paper within the body of work on limit models.  The general arguments for investigating the uniqueness of limit models have appeared before (see \cite{Va1, gvv}). One use is that they give a version of saturated models without dealing with smaller models and give a sense of how difficult it is to create saturated models.  Many works of AECs take a `local approach' of analyzing $\K_\lambda$ (the models of size $\lambda$) to derive structure on $\K_{\lambda^+}$ (see \cite[Chapter II]{shelahaecbook} or \cite{sh576} for the most prominent examples).  Because not even the existence of models of size $<\lambda$ is assumed, Galois saturation (which quantifies over smaller models) cannot be used, and limit models have become the standard substitute. Moreover, we expect that limit models will take on a greater importance in the context of strictly stable AECs, especially those without assumption of tameness.  Of the various analogues for AECs (see \cite[Theorem 1.2]{grva}), most have seen extensive analysis, but only in the context of tameness.  One of the remaining notions (solvability; see \cite[Chapter IV]{shelahaecbook}) seems to have no weakening to the strictly stable context.  What remains are $\mu$-superstability and the uniqueness of limit models.  Thus, it is reasonable to assume that understanding strictly stable AECs will require understanding the connection between `$\mu$-stability' (Assumption \ref{assm} here) and limit models.  Theorem \ref{uniqueness theorem} is a step towards this understanding.  

After circulating this paper but before publication, Vasey and Mazari-Armida used our results to make further progress in the field.  Vasey used Theorem \ref{uniqueness theorem} in his work to characterize stable AECs \cite{v-stab-AEC}, especially in terms of unions of sufficiently saturated models being saturated \cite[Theorem 11.11]{v-stab-AEC}.  Additionally, Vasey \cite[Theorem 3.7]{v-stab-AEC} gives some natural conditions for Assumption \ref{assm}.(\ref{wc-split}) below, which he calls the weak continuity of splitting. 
On the other hand, Mazari-Armida identified naturally occuring strictly stable AECs.  By analyzing limit models of different cofinalities, he demonstrated that the class of torsion-free abelian groups and the class of finitely Butler groups, both with the pure subgroup relation, are strictly stable AECs \cite{marcos}.

Section \ref{background-sec} reviews key definitions and facts with Assumption \ref{assm} being the key hypotheses throughout the paper.  Section \ref{relfulltow-sec} discusses the notion of relatively full towers.  Section \ref{redtow-sec} discusses reduced towers and proves the key lemma, Theorem \ref{reduced are continuous}.  Section \ref{ulm-sec} concludes with a proof of the main theorem, Theorem \ref{uniqueness theorem}.

We would like to thanks Rami Grossberg, Sebastien Vasey, and the referee for comments on earlier drafts of this paper that led to a vast improvement in presentation.

\section{Background} \label{background-sec}
We refer the reader to \cite{Ba}, \cite{GV2}, \cite{gvv}, \cite{Va1}, and \cite{Va2} for definitions and notations of concepts such as Galois-stability, $\mu$-splitting,  etc.  We reproduce a few of the more specialized definitions and results here.

Grossberg, VanDieren, and Villaveces \cite[Assumption 2.8]{gvv} isolated a notion they call `$\mu$-superstability'\footnote{We do not use this here, but the definition of $\mu$-superstability strengthens Assumption \ref{assm} by requiring that $\kappa^*_\mu(\K)$ be $\omega$.} by examining consequences of categoricity from \cite{sh394} and \cite{ShVi}.  The key feature in this assumption is that there are no infinite splitting chains (as forbidden in \cite[Theorem 2.2.1]{ShVi}).  We weaken $\mu$-superstability by only forbidding long enough splitting chains.  How long is `long enough' is measured by $\kappa^*_\mu(\K)$, which is a relative of \cite[Definition 4.3]{GV2} and universal local character \cite[Definition 3.5]{bgcoheir}.  Following \cite{bgcoheir}, we add the `*' to this symbol to denote that the chain is required to have the property that $M_{i+1}$ is universal over $M_i$.

\begin{definition} \label{kappastar-def}
We define $\kappa^*_\mu(\K)$ to be the minimal, regular $\kappa<\mu^+$ so that for every increasing and continuous sequence $\langle M_i\in\K_\mu\mid i\leq\alpha \rangle$ with $\alpha\geq \kappa$ regular which satisfies
 for every $i<\alpha$, $M_{i+1}$ is universal over $M_i$, and for every non-algebraic $p\in\gaS(M_\alpha)$, 
there exists $i<\alpha$ such that $p$ does not $\mu$-split over $M_i$.  If no such $\kappa$ exists, we say $\kappa^*_\mu(\K)=\infty$.

We call $\kappa^*_\mu(\K)$ the `universal local character for $\mu$-nonsplitting for $\K$,' or simply the `universal local character' for short when $\mu$ and $\K$ are fixed.
\end{definition}

In \cite[Theorem 4.13]{GV2}, Grossberg and VanDieren show that  if $\K$ is a tame stable abstract elementary class satisfying the joint embedding and amalgamation properties with no maximal models, then there exists a single bound for $\kappa^*_\mu(\K)$  for all sufficiently large $\mu$ in which $\K$ is $\mu$-stable.  This proof works by considering the $\chi$-order property of Shelah.  We can also give a direct bound assuming tameness.

\begin{proposition}
Let $\K$ be an AEC with amalgamation that is $\lambda$-stable and $(\lambda, \mu)$-tame.  Then $\kappa^*_\mu(\K) \leq \lambda$.
\end{proposition}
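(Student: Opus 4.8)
The plan is to unpack the definitions and show directly that any splitting chain of the relevant form of length $\lambda$ (regular, with each successor universal over its predecessor) already witnesses non-splitting over some earlier model; this gives $\kappa^*_\mu(\K)\le\lambda$. So fix an increasing continuous chain $\langle M_i\in\K_\mu\mid i\le\lambda\rangle$ with $M_{i+1}$ universal over $M_i$ for all $i<\lambda$, and a non-algebraic $p\in\gaS(M_\lambda)$; I must produce $i<\lambda$ with $p$ not $\mu$-splitting over $M_i$. Wait—the definition of $\kappa^*_\mu$ quantifies over chains of length $\alpha$ for \emph{every} regular $\alpha\ge\kappa$, so strictly I should run the argument for an arbitrary regular $\alpha\ge\lambda$; but since $\lambda$-stability is what is available and the witness only needs to sit below $\alpha$, it suffices to handle a cofinal piece, and the argument below localizes to an initial segment of length $\lambda$ anyway.

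First I would reduce to a domain of size $\lambda$: since $p\in\gaS(M_\lambda)$ and $|M_\lambda|=\mu$ may exceed $\lambda$, I cannot talk about $\gaS(M_\lambda)$ having size $\lambda$ directly. Instead, use that $\mu$-splitting of $p$ over $N$ is detected by a pair of $\mu$-sized submodels $N\prec N_1,N_2\prec M_\lambda$ with $N_1\cong_N N_2$ and $p\restriction N_1, p\restriction N_2$ conjugate but unequal. The key move is to find, using $\lambda$-stability, a $\prec$-submodel $M^*\prec M_\lambda$ with $|M^*|=\lambda$ that "controls" $p$ in the sense that $p$ does not $\mu$-split over $M^*$—this is where $\lambda$-stability does the work, via the standard argument that over a $\lambda$-stable theory any type is nonsplitting over a model of size $\le\lambda$ (build an increasing chain of length $\lambda^+$ of $\lambda$-sized models and use that only $\lambda$ many types are realized to stop the splitting). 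Then, using $(\lambda,\mu)$-tameness together with the fact that the $M_i$ form an increasing continuous chain with universal successors cofinal in $M_\lambda$, I would locate some $i<\lambda$ with $M^*\prec M_i$ (here I may need to first absorb $M^*$ into the chain by universality: each $M_{i+1}$ being universal over $M_i$ lets me embed the $\lambda$-sized $M^*$ over a suitable initial model, and then use tameness to transfer non-$\mu$-splitting from $M^*$ to $M_i$). Finally conclude that $p$ does not $\mu$-split over $M_i$, which is exactly the witness required, so $\kappa^*_\mu(\K)\le\lambda$.

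The main obstacle I anticipate is the interface between the two "sizes" at play: $p$ lives over a model of size $\mu$, splitting is a $\mu$-notion (tested by $\mu$-sized submodels), yet the bound I want is $\lambda\le\mu$, so I must pass to $\lambda$-sized approximations without losing track of which base model controls the splitting. Tameness $(\lambda,\mu)$ is precisely the tool that lets a $\lambda$-sized base adequately certify behavior of a type over a $\mu$-sized model, so the delicate point is arranging that the $\lambda$-sized non-splitting base can be taken to be (or to be $\prec$-below) one of the $M_i$ with $i<\lambda$; this is handled by the universality of the successors in the chain, but the bookkeeping—embedding $M^*$ appropriately and invoking monotonicity of non-$\mu$-splitting under $\prec$—is the part that needs care rather than cleverness.
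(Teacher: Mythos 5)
Your overall skeleton matches the paper's: cut the base down to size $\lambda$ using stability, invoke tameness, then pass to a member of the chain. But you have tameness in the wrong place, and as written the argument has a real gap. The splitting-chain argument from $\lambda$-stability that you sketch (and which the paper simply cites as \cite[Claim 3.3.(1)]{sh394}) produces a $\lambda$-sized $N_0 \prec M_\alpha$ over which $p$ does not \emph{$\lambda$-split}: the chain of length $<\lambda^+$ you build to stop the splitting consists of $\lambda$-sized models, so what it certifies is that no pair of $\lambda$-sized witnesses exhibits splitting of $p$ over $N_0$. It says nothing, on its own, about $\mu$-sized witnesses; your claim that ``$\lambda$-stability does the work'' to get non-$\mu$-splitting over $M^*$ is unjustified. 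The entire point of $(\lambda,\mu)$-tameness here is to \emph{promote} non-$\lambda$-splitting over $N_0$ to non-$\mu$-splitting over the \emph{same} base $N_0$ (a $\mu$-sized witness to splitting would, by tameness, restrict to a $\lambda$-sized one). You instead spend tameness on ``transferring non-$\mu$-splitting from $M^*$ to $M_i$,'' but that step needs no tameness at all: once $N_0 \prec M_{i_*}$, plain monotonicity of non-splitting (enlarging the base preserves non-splitting) gives $p$ does not $\mu$-split over $M_{i_*}$. Your closing remark that ``tameness is precisely the tool that lets a $\lambda$-sized base certify behavior over a $\mu$-sized model'' is the right intuition, but it has to be applied before you move to $M_{i_*}$, not after.

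A secondary point: the ``absorb $M^*$ into the chain by universality'' idea is both unnecessary and hazardous. When $\alpha$ is regular and $>\lambda$, the fact that $|N_0|=\lambda < \cf(\alpha)$ already forces $N_0 \subseteq M_{i_*}$ for some $i_* < \alpha$, hence $N_0 \prec M_{i_*}$ by coherence, with no embedding needed. If you instead embed $M^*$ into some $M_i$ via universality, you move the ambient situation by an automorphism and the non-splitting fact you established applies to the image of $p$, not to $p$ itself, without further argument. (There is a boundary case $\alpha = \lambda$ where locating $i_*$ really is delicate; the paper's one-line appeal to ``the cofinality assumption'' is also quick there, so this is not a defect peculiar to your write-up.) The corrected chain of reasoning is short: $\lambda$-stability gives $N_0$ of size $\lambda$ with non-$\lambda$-splitting; $(\lambda,\mu)$-tameness upgrades this to non-$\mu$-splitting over $N_0$; cofinality gives $i_* < \alpha$ with $N_0 \prec M_{i_*}$; monotonicity finishes.
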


Note that the proof does not require the extensions to be universal.

\begin{proof}
Let $\langle M_i \in K_\mu : i \leq \alpha \rangle$ be an increasing, continuous chain with $\cf(\alpha) \geq \lambda$ and $p \in \gaS(M_\alpha)$.  By \cite[Claim 3.3.(1)]{sh394} and $\lambda$-stability, there is $N_0 \prec M_\alpha$ of size $\lambda$ such that $p$ does not $\lambda$-split over $N_0$.  By tameness, $p$ does not $\mu$-split over $N_0$.  By the cofinality assumption, there is $i_* < \alpha$ such that $N_0 \prec M_{i_*}$.  By monotonicity, $p$ does not $\mu$-split over $M_{i_*}$.
\end{proof}

This definition motivates our main assumption.  We use this collection only to group these items together and will explicitly list Assumption \ref{assm} when it is part of a result's hypothesis.

\begin{assumption}\label{assm}
\mbox{}
\begin{enumerate}
\item $\K$ satisfies the joint embedding and amalgamation properties with no maximal model of cardinality $\mu$. 
\item $\K$ is stable in $\mu$.
\item $\kappa^*_\mu(\K)<\mu^+$.
\item \label{wc-split}$\K$ satisfies (limit) continuity for non-$\mu$-splitting (i.e. if $p\in\gaS(M)$ and $M$ is a limit model witnessed by $\langle M_i\mid i<\theta\rangle$ for some limit ordinal $\theta<\mu^+$ and there exists $N$ so that $p\restriction M_i$ does not $\mu$-split over $N$ for all $i<\theta$, then $p$ does not $\mu$-split over $N$).
\end{enumerate}

\end{assumption}

A few comments on the assumption is in order.  Note that tameness is not assumed in this paper.  Amalgamation is commonly assumed in the study of limit models, although \cite{ShVi,Va1, Va1e} replace it with more nuanced results about amalgamation bases. Stability in $\mu$ is necessary for the conclusion of Theorem \ref{uniqueness theorem} to make sense; otherwise, there are no limit models!  We have argued (both in principle and in practice) that varying the local character cardinal is the right generalization of superstability to stability in this context.  However, we have kept the ``continuity cardinal'' to be $\omega$; this is the content of Assumption \ref{assm}.(\ref{wc-split}).  This seems necessary for the arguments\footnote{The first author claimed in the discussion following \cite[Lemma 9.1]{extendingframes} that only long continuity was necessary.  However, after discussion with Sebastien Vasey, this seems to be an error.}.  It seems reasonable to hope that some failure of continuity for non-splitting will lead to a nonstructure result, but this has not yet been achieved.

The assumptions are (trivially) satisfied in any superstable AEC and, therefore, any categorical AEC.  However, in this context, the result is already known.  For a new example, we look to the context of strictly stable homogeneous structures as developed in Hyttinen \cite[Section 1]{hyttinen}.  In the homogeneous contexts, Galois types are determined by syntactic types.  Armed with this, Hyttinen studies the normal syntactic notion of nonsplitting under a stable, unsuperstable hypothesis \cite[Assumption 1.1]{hyttinen}, and shows that syntactic splitting satisfies continuity and (more than) the universal local character of syntactic nonsplitting is $\aleph_1$.\footnote{It shows that it is at most $\aleph_1$.  However, if it were $\aleph_0$, the class would be superstable, contradicting the assumption.}  It is easy to see that the syntactic version of nonsplitting implies our nonsplitting, which already implies $\kappa^*_\mu(\K) = \aleph_1$.  The following argument shows that, if $N$ is limit over $M$, the converse holds as well, which is enough to get the limit continuity for our semantic definition of splitting.  Since the context of homogeneous model theory is very tame, we don't worry about attaching a cardinal to non-splitting because they are all equivalent.

Suppose that $N$ is a limit model over $M$, witnessed by $\langle M_i \mid i < \alpha \rangle$, and $p \in \gaS(M)$ syntactically splits over $M$.  Then, since Galois types are syntactic, there are $b, c \in N$ such that $\tp(b/M) = \tp(c/M)$ and, for an appropriate $\phi$, $\phi(x, b, m) \wedge \neg \phi(x, c, m) \in p$.  We can find $\beta, \beta' < \alpha$ such that $b \in N_\beta$ and $c \in N_{\beta'}$.  Since $b$ and $c$ have the same type, we can find an amalgam $N_* \succ N_\beta$ and $f:N_\alpha \to_{M} N_*$ such that $f(b) = c$.  Since $N$ is universal over $N_{\beta'}$, we can find $h:N_* \to_{N_{\beta'}} N$.  This gives us an isomorphism $h \circ f:N_\beta \cong h(f(N_\beta))$ and we claim that this witnesses the semantic version splitting: $c \in N_{\beta'}$, so $c = h(c) = h(f(b)) \in h(f(N_\beta))$ and, thus, $\neg \phi(x, c, m) \in p \upharpoonright h(f(N_\beta))$.  On the other hand, $\phi(x, c, m) = h\circ f(\phi(x, b, m)) \in h \circ f( p \upharpoonright N_\beta)$.  Thus, we have witnessed $h \circ f(p \upharpoonright N_\beta) \neq p \upharpoonright h(f(N_\beta))$.

Note if $\kappa^*_\mu(\K) = \mu$, then the conclusion of Theorem \ref{uniqueness theorem} is uninteresting, but the results still hold: any two limit models whose lengths have the same cofinality are isomorphic on general grounds.  Also, we assume joint embedding, etc. only in $\K_\mu$. However, to simplify presentation, we work as though these properties held in all of $\K$ and, thus, we work inside a monster model.  This will allow us to write $\tp(a/M)$ rather than $\tp(a/M;N)$ and witness Galois type equality with automorphisms.  The standard technique of working inside of a $(\mu,\mu^+)$-limit model  can translate our proofs to ones not using a monster model.

Under these assumptions, it is possible to construct towers.  This is the key technical tool in this construction.  Towers were introduced in Shelah and Villaveces \cite{ShVi} and expanded upon in \cite{Va1} and subsequent works.

Since $I$ is well-ordered, it has a successor function which we will denote $+1$ (or $+_I 1$ if necessary).  Also, we typically restrict our attention to well-ordered $I$.

\begin{definition}[{\cite[Definition I.5.1]{Va1}}]\

\begin{enumerate}

\item A \emph{tower indexed by $I$ in $\K_\mu$} is a triple $\T = \langle \bar M, \bar a, \bar N \rangle$ where

\begin{itemize}
	\item $\bar M=\langle M_i\in\K_\mu\mid i\in I\rangle$ is an increasing sequence of limit models;
	\item $\bar a=\langle a_{i}\in M_{i+1}\backslash M_i\mid i+1\in I\rangle$ is a sequence of elements; 
	\item $\bar N=\langle N_{i} \in K_\mu \mid i+1\in I\rangle$ such that $N_i \prec M_i$ with $M_i$ universal over $N_i$; and
	\item $\tp(a_i/M_i)$ does not $\mu$-split over $N_i$.
\end{itemize}
\item A tower $\T = \langle \bar M, \bar a, \bar N\rangle$ is \emph{continuous} iff $\bar M$ is, i. e., $M_i = \cup_{j<i} M_j$ for all limit $i \in I$.
\item $\K^*_{\mu, I}$ is the collection of all towers indexed by $I$ in $\K_\mu$.

\end{enumerate}
\end{definition}

Crucially, in the above definition, if $I$ has a maximum element $i$, then a tower $\T = (\bar{M}, \bar{a}, \bar{N})$ has a model $M_i$, but not terms $a_i$ or $N_i$. Note that continuity is not required of all towers.

 We will switch back and forth between the notation $\K^*_{\mu,\alpha}$ where $\alpha$ is an ordinal and $\K^*_{\mu,I}$ where $I$ is a well ordered set (of order type $\alpha$) when it will make the notation clearer.  
When we deal with relatively full towers, we will find the notation using $I$ to be more convenient for book-keeping purposes. 

For $\beta<\alpha$ and $\T=(\bar M,\bar a,\bar N)\in\K^*_{\mu,\alpha}$ we write $\T\restriction \beta$ for the tower made up of the sequences $\bar M\restriction \beta:=\langle M_i\mid i<\beta\rangle$, $\bar a\restriction\beta:=\langle a_i\mid i+1<\beta\rangle$, and $\bar N\restriction \beta:=\langle N_i\mid i+1<\beta\rangle$.
 
 We will construct increasing chains of towers.  Here we define what it means for one tower to extend another:
 
\begin{definition}
For $I$ a sub-ordering of $I'$ and
 towers $(\bar M,\bar a,\bar N)\in\K^*_{\mu,I}$ and $(\bar M',\bar a',\bar N')\in\K^*_{\mu,I'}$, we say $$(\bar M,\bar a,\bar N)\leq (\bar M',\bar a',\bar N')$$ if $\bar a=\bar a'\restriction I$, $\bar N=\bar N'\restriction I$, and for $i\in I$, $M_i\preceq_{\K}M'_i$  and whenever $M'_i$ is a proper extension of $M_i$, then $M'_i$ is universal over $M_i$.  If for each $i\in I$,  $M'_i $ is universal over $M_i$ we will write $(\bar M,\bar a,\bar N)< (\bar M',\bar a',\bar N')$.
 
 Note if $I' = I$, then we have that $\bar{a} = \bar{a}'$ and $\bar{N} = \bar{N}'$.
\end{definition}

For $\gamma$ a limit ordinal  $<\mu^+$ and $\langle I_j\mid j<\gamma\rangle$ a sequence of well ordered sets with $I_j$ a sub-ordering of $I_{j+1}$, if $\langle(\bar M^j,\bar a,\bar N)\in\K^*_{\mu,I_j}\mid j<\gamma\rangle$ is a $<$-increasing sequence of towers, then the union $\T$ of these towers is determined by the following: 
\begin{itemize}
\item for each $\beta\in \Union_{j<\gamma}I_j$,
 $M_\beta:=\Union_{\beta\in I_j;\; j<\gamma}M^j_\beta$ 
 \item the sequence $\langle a_\beta\mid \exists (j<\gamma)\; \beta+1,\beta\in I_j\rangle$, and  
 \item the sequence $\langle N_\beta\mid \exists (j<\gamma)\; \beta+1,\beta\in I_j\rangle$
 \end{itemize}  is a tower in $\K^*_{\mu,\Union_{j<\gamma}I_j}$, provided that $\K$ satisfies the continuity property for non-$\mu$-splitting and that $\Union_{j<\gamma} I_j$ is well ordered.  Note that it is our desire to take increasing unions of towers that leads to the necessity of the continuity property.

We also need to  recall a few facts about directed systems of partial extensions of towers 
 that are implicit in \cite{Va1}.  These are helpful tools in the inductive construction of towers and are used in other work (see, e.g., \cite[Facts 2 and 3]{Va2}):  Fact \ref{successor stage prop} will get us through the successor step of inductive constructions of directed systems, and Fact \ref{limit stage prop} describes how to pass through the limit stages.  An explicit proof of Fact \ref{limit stage prop} appears as \cite[Fact 3]{Va2}, and we provide a proof of Fact \ref{successor stage prop} below.  Two important notes:
 \begin{itemize}
     \item These facts do not require that the towers be continuous.
     \item The work in \cite{Va1} does not assume amalgamation, so more care had to be taken in working with large limit models (in place of the monster model) and towers made of amalgamation bases.  The amalgamation assumption in this (and other) papers significantly simplifies the situation.
 \end{itemize}

\begin{fact}[\cite{Va1}]\label{successor stage prop}
Suppose $\T$ is a tower in $\K^*_{\mu,\alpha}$ and $\T'$ is a tower of length $\beta<\alpha$ with $\T\restriction \beta<\T'$, if $f\in\Aut_{M_\beta}(\C)$ and $M''_\beta$ is a limit model universal over $M_{\beta}$ such that $\tp(a_\beta/M''_\beta)$ does not $\mu$-split over $N_\beta$ and $f(\Union_{i<\beta}M'_i)\prec_{\K}M''_\beta$, then the tower $\T''\in\K^*_{\mu,\beta+1}$ defined by $f(\T')$ concatenated with the model $M''_\beta$, element $a_\beta$ and submodel $N_\beta$ is an extension of $\T\restriction (\beta+1)$.
\end{fact}

\begin{proof}
This is a routine verification from the definitions. $\T''\rest \beta$ is isomorphic to the tower $\T'$ and we are given the required nonsplitting and that, for $i < \beta$, $f(M_i')\prec M_\beta''$, so we have that $\T'' \in \K^*_{\mu, \beta+1}$.  Similarly, $f$ fixes $\T \rest \beta$, so $\T \rest \beta < \T'$ implies $\T \rest \beta < \T'' \rest \beta$.  To extend this to $\T \rest (\beta+1) < \T''\rest (\beta+1) = \T''$, we note that $M_\beta''$ is universal over $M_\beta$ by assumption.
\end{proof}

\begin{fact}[\cite{Va1}]\label{limit stage prop}
Fix $\T\in\K^*_{\mu,\alpha}$ for $\alpha$ a limit ordinal.
Suppose $\langle \T^i\in\K^*_{\mu,i}\mid i<\alpha\rangle$  and $\langle f_{i,j}\mid i\leq j<\alpha\rangle$ form a directed system of towers.  Suppose
\begin{itemize}
\item each $\T^i$ extends $\T\restriction i$
\item $f_{i,j}\restriction M_i=id_{M_i}$
\item $M^{i+1}_{i+1}$ is universal over 
$f_{i,i+1}(M^i_i)$.
\end{itemize}
Then there exists a direct limit $\T^\alpha$ and mappings $\langle f_{i,\alpha}\mid i<\alpha\rangle$ to this system so that $\T^\alpha\in\K^*_{\mu,\alpha}$, $\T^\alpha$ extends $\T$, and $f_{i,\alpha}\restriction M_i=id_{M_i}$.
\end{fact}

Finally, to prove results about the uniqueness of limit models, we will additionally need to assume that non-$\mu$-splitting satisfies a symmetry property over limit models. We refine the definition of symmetry from \cite[Definition 3]{Va2} for non-$\mu$-splitting; this localization only requires symmetry to hold when $M_0$ is $(\mu, \delta)$-limit over $N$.

\begin{definition}\label{mu-delta symmetry}
Fix $\mu\geq\LS(\K)$ and $\delta$ a limit ordinal $<\mu^+$.
We say that an abstract elementary class exhibits \emph{$(\mu,\delta)$-symmetry for non-$\mu$-splitting} if  whenever models $M,M_0,N\in\K_\mu$ and elements $a$ and $b$  satisfy the conditions \ref{limit sym cond}-\ref{last} below, then there exists  $M^b$  a limit model over $M_0$, containing $b$, so that $\tp(a/M^b)$ does not $\mu$-split over $N$.  See Figure \ref{fig:sym}.
\begin{enumerate} 
\item\label{limit sym cond} $M$ is universal over $M_0$ and $M_0$ is a $(\mu,\delta)$-limit model over $N$.
\item\label{a cond}  $a\in M\backslash M_0$.
\item\label{a non-split} $\tp(a/M_0)$ is non-algebraic and does not $\mu$-split over $N$.
\item\label{last} $\tp(b/M)$ is non-algebraic and does not $\mu$-split over $M_0$. 
   
\end{enumerate}

\end{definition}

\begin{figure}[h]
\begin{tikzpicture}[rounded corners=5mm, scale=3,inner sep=.5mm]
\draw (0,1.25) rectangle (.75,.5);
\draw (.25,.75) node {$N$};
\draw (0,0) rectangle (3,1.25);
\draw (0,1.25) rectangle (1,0);
\draw (.85,.25) node {$M_0$};
\draw (3.2, .25) node {$M$};
\draw[color=gray] (0,1.25) rectangle (1.5, -.5);
\node at (1.1,-.25)[circle, fill, draw, label=45:$b$] {};
\node at (2,.75)[circle, fill, draw, label=45:$a$] {};
\draw[color=gray] (1.75,-.25) node {$M^{b}$};
\end{tikzpicture}
\caption{A diagram of the models and elements in the definition of $(\mu,\delta)$-symmetry. We assume the type $\tp(b/M)$ does not $\mu$-split over $M_0$ and $\tp(a/M_0)$ does not $\mu$-split over $N$.  Symmetry implies the existence of $M^b$ a limit model over $M_0$ so that $\tp(a/M^b)$  does not $\mu$-split over $N$.} \label{fig:sym}
\end{figure}

\begin{remark}\label{sym-rem}
In order to standardize notation, we will typically invoke `symmetry for $(M, M_0, N, a, b)$' to make the role of the models involved clear.

Note that $(\mu, \delta)$-symmetry is the same as $(\mu, \cf \delta)$-symmetry.  Also, the length of the limit model $M^b$ is not specified

\end{remark}

\section{Relatively Full Towers} \label{relfulltow-sec}

One approach to proving the uniqueness of limit models is to construct a continuous relatively full tower of length $\theta$, and then conclude that the union of the models in this tower is a $(\mu,\theta)$-limit model.  In this section we confirm that this approach can be carried out in this context, even if we remove continuity along the relatively full tower.

\begin{definition}[{\cite[Definition 3.2.1]{ShVi}}]\label{strong type
    defn}

For $M$ a $(\mu,\theta)$-limit model, \index{strong
    types}\index{Galois-type!strong}\index{$\St(M)$}\index{$(p,N)$}
    let
$$\St(M):=\left\{\begin{array}{ll}
(p,N)
& 
\left|\begin{array}{l}
N\prec_{\K}M;\\
N\text{ is a }(\mu,\theta)\text{-limit model};\\
M\text{ is universal over }N;\\
p\in \gaS(M)\text{ is non-algebraic}\\
\text{and }p\text{ does not }\mu\text{-split over }N.
\end{array}\right\}
\end{array}\right .
$$
Elements of $\St(M)$ are called {\em strong types.}
Two strong types $(p_1,N_1)\in\St(M_1)$ and $(p_2,N_2)\in\St(M_2)$
are
\emph{parallel} iff for every $M'$ of cardinality $\mu$ extending $M_1$
and $M_2$ there exists $q\in\gaS(M')$ such that $q$ extends both $p_1$
and $p_2$ and $q$ does not $\mu$-split over $N_1$ nor over $N_2$. 

\end{definition}

\begin{definition}[Relatively Full Towers]\label{def:relativefulltowers}
  Suppose that $I$ is a well-ordered set.  Let $(\bar M,\bar a,\bar N)$ be a tower indexed by $I$ such that
  each $M_i$ is a $(\mu,\sigma)$-limit model.  For each
$i$, let
$\langle M^\gamma_{i}\mid \gamma<\sigma\rangle$ witness that
$M_{i}$ is a
$(\mu,\sigma)$-limit model.\\
The tower
$(\bar M,\bar a,\bar N)$ is
\emph{full relative to
$(M^\gamma_{i})_{\gamma<\sigma,i\in I}$} iff 
\begin{enumerate} 
\item \label{niceorder-def} there exists a
  cofinal sequence $\langle i_\alpha\mid\alpha<\theta\rangle$ of $I$
  of order type $\theta$ such that
  there are $\mu\cdot \omega$ many elements between $i_\alpha$ and
  $i_{\alpha+1}$ and
\item\label{strong type condition} for every
$\gamma<\sigma$ and every $(p,M^\gamma_{i})\in\St(M_{i})$ with
$i_\alpha\leq i<i_{\alpha+1}$, there exists
$j\in I$   with $i\leq j< i_{\alpha+1}$ such that
$(\tp(a_j/M_j),N_j)$ and
$(p,M^\gamma_{i})$ are parallel.
\end{enumerate}
\end{definition}

The following proposition will allow us to use relatively full towers to produce limit models.  The fact that relatively full towers yield limit models was first proved in \cite{Va1} and in \cite{gvv} and later improved in \cite[Proposition 4.1.5]{Dr}.
We notice here that the proof of \cite[Proposition 4.1.5]{Dr} does not require that the tower be continuous and does not require that $\kappa^*_\mu(\K)=\omega$.  We provide the proof for completeness.

\begin{proposition}[Relatively full towers provide limit
  models]\label{relatively full is limit} Let $\theta$ be a limit ordinal
  $<\mu^+$ satisfying $\theta=\mu\cdot\theta$.  Suppose that $I$ is a
  well-ordered set
  as in Definition \ref{def:relativefulltowers}.(\ref{niceorder-def}).

Let $(\bar M,\bar a,\bar N)\in\K_{\mu,I}^*$ be a tower made up of
$(\mu,\sigma)$-limit models, for some fixed $\sigma$ with $\kappa^*_\mu(\K)\leq\cf(\sigma)<\mu^+$. If $(\bar M,\bar a,\bar
N)\in\K^*_{\mu,I}$ is full relative to $(M^\gamma_i)_{i\in  I,\gamma<\sigma}$, then
$M:=\Union_{i\in I}M_i$ is a $(\mu,\theta)$-limit model over $M_{i_0}$.
\end{proposition}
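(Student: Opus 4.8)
The plan is to show that $M = \bigcup_{i \in I} M_i$ is a $(\mu,\theta)$-limit model over $M_{i_0}$ by exhibiting an increasing, continuous chain $\langle M^*_\alpha \mid \alpha < \theta \rangle$ witnessing this, where $M^*_\alpha$ is built from the pieces of the tower lying below $i_\alpha$ together with the internal resolutions $\langle M^\gamma_i \mid \gamma < \sigma\rangle$. Concretely, I would set $M^*_\alpha := \bigcup_{i < i_\alpha} M_i$ (taking some care at the bottom so that $M^*_0 = M_{i_0}$), observe that this chain is increasing and continuous with union $M$, that each $M^*_\alpha \in \K_\mu$ (using $\theta = \mu \cdot \theta$ and the $\mu\cdot\omega$-many indices between consecutive $i_\alpha$'s from Definition \ref{def:relativefulltowers}.(\ref{niceorder-def}) to control the cardinality), and then the real content: each $M^*_{\alpha+1}$ is universal over $M^*_\alpha$.

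For the universality step, the key tool is the relative fullness condition (\ref{strong type condition}) together with the characterization of when adding an element realizing a strong type produces a universal extension. Given any $N' \prec M^*_\alpha$ of size $\mu$ and any $N'' \succ M^*_\alpha$ of size $\mu$, I want to embed $N''$ into $M^*_{\alpha+1}$ over $M^*_\alpha$. Using stability in $\mu$ and the fact that $\kappa^*_\mu(\K) \le \cf(\sigma) < \mu^+$ (this is where the local character hypothesis enters, via the argument already used for \cite[Proposition 4.1.5]{Dr}), one finds that every type over $M^*_\alpha$ that one needs to realize does not $\mu$-split over some $M^\gamma_i$ with $i_\alpha \le i$, hence corresponds to a strong type $(p, M^\gamma_i) \in \St(M_i)$; the fullness condition then supplies $j$ with $i \le j < i_{\alpha+1}$ such that $(\tp(a_j/M_j), N_j)$ is parallel to $(p, M^\gamma_i)$, so that $a_j \in M_{j+1} \subseteq M^*_{\alpha+1}$ realizes (an extension of) the relevant type. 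Running a back-and-forth / realizing-many-types argument of length $\mu\cdot\omega$ across the indices between $i_\alpha$ and $i_{\alpha+1}$ then produces the desired embedding of $N''$ into $M^*_{\alpha+1}$ over $M^*_\alpha$, establishing universality. Crucially, because $\mu$-splitting over a fixed $N_j$ with $M_{j+1}$ universal over $N_j$ is used here rather than continuity of the tower, the non-continuity of $(\bar M, \bar a, \bar N)$ causes no trouble — one only ever refers to individual models $M_i$ and their fixed internal resolutions.

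The main obstacle I expect is the careful bookkeeping that ties together three indexings at once: the limit-model resolution index $\gamma < \sigma$, the tower index $i \in I$, and the blocks delimited by $\langle i_\alpha \mid \alpha < \theta\rangle$. One must verify that every type over the partial union $M^*_\alpha$ that arises in the back-and-forth genuinely "lives" at some bounded level $i < i_{\alpha+1}$ and does not $\mu$-split over some $M^\gamma_i$ — this is exactly where $\cf(\sigma) \ge \kappa^*_\mu(\K)$ is invoked, applied to the resolution of the relevant $M_i$ — and that the $\mu\cdot\omega$ room between consecutive $i_\alpha$'s is enough to realize all of the (at most $\mu$, by stability) required types while keeping each $M^*_\alpha$ of size $\mu$. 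Everything else (continuity and increasingness of the chain, $\bigcup_\alpha M^*_\alpha = M$, $M^*_0 = M_{i_0}$) is routine once the indexing is set up. Since this is precisely the argument of \cite[Proposition 4.1.5]{Dr} with the two observed weakenings (no continuity of the tower, and $\kappa^*_\mu(\K)$ possibly $> \omega$ replacing superstability by $\cf(\sigma) \ge \kappa^*_\mu(\K)$), I would present it in that form, flagging exactly where each hypothesis is used.
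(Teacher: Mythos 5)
Your overall strategy---building a witnessing chain from the tower and combining the fullness condition with $\kappa^*_\mu(\K)\leq\cf(\sigma)$ and a back-and-forth argument---is the right one and agrees with the paper at a high level. However, there is a genuine gap in the granularity of the witnessing chain, and it is precisely the gap that the hypothesis $\theta=\mu\cdot\theta$ is there to close (you invoke that hypothesis only to ``control cardinality,'' which is not where it is needed). The problem is the claim that each $M^*_{\alpha+1}$ is universal over $M^*_\alpha$. You want to run a back-and-forth of length $\mu$ through the single block $[i_\alpha,i_{\alpha+1})$, but that block has order type $\mu\cdot\omega$, hence cofinality $\omega$, and the index $j$ that Definition \ref{def:relativefulltowers}.(\ref{strong type condition}) returns is only guaranteed to lie somewhere in $[i,i_{\alpha+1})$---nothing in the definition lets you bound where. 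After as few as $\omega$ steps the realizing indices can become cofinal in the block; at the first such limit stage the model you must extend your type over is no longer $\preceq M_j$ for any $j<i_{\alpha+1}$, so neither $\kappa^*_\mu(\K)$ nor the fullness condition can be applied, and the construction stalls.

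The paper avoids this by using $\theta=\mu\cdot\theta$ to reindex the cofinal sequence as $\langle i_{\mu\alpha+\delta}\mid\alpha<\theta,\ \delta<\mu\rangle$. For each fixed anchor $i_{\mu\alpha+\delta}$, one shows that $M_{i_{\mu\alpha+\delta+1}}$ realizes every Galois type over $M_{i_{\mu\alpha+\delta}}$: given $p\in\gaS(M_{i_{\mu\alpha+\delta}})$, local character for the resolution $\langle M^\gamma_{i_{\mu\alpha+\delta}}\mid\gamma<\sigma\rangle$ produces $(p,M^\gamma_{i_{\mu\alpha+\delta}})\in\St(M_{i_{\mu\alpha+\delta}})$, and fullness yields $a_k$ with $i_{\mu\alpha+\delta}\leq k<i_{\mu\alpha+\delta+1}$ realizing $p$. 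This single step is entirely confined to one block, with no iteration inside it, so no boundedness issue arises. The chain $\langle M_{i_{\mu\alpha+\delta}}\mid\delta<\mu\rangle$ is then a $1$-special chain in the sense of \cite{Ba}, so $M_{i_{\mu\alpha+\mu}}=M_{i_{\mu(\alpha+1)}}$ is universal over $M_{i_{\mu\alpha}}$, and the witnessing chain for the $(\mu,\theta)$-limit is $\langle M_{i_{\mu\alpha}}\mid\alpha<\theta\rangle$---the anchors at positions $\mu\cdot\alpha$---rather than your $\langle M^*_\alpha\mid\alpha<\theta\rangle$ at positions $\alpha$. To repair your version, coarsen the chain (e.g., take $M^*_\alpha:=\Union_{i<i_{\mu\alpha}}M_i$, or just use $M_{i_{\mu\alpha}}$ itself), so that between consecutive witnessing models there are $\mu$-many anchors carrying the $1$-special chain.
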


\begin{proof}
Because  the sequence $\langle i_\alpha\mid \alpha<\theta\rangle$ is cofinal in $I$ and  $\theta=\mu\cdot\theta$, we can rewrite $M:=\Union_{i\in I}M_i=\Union_{\beta<\theta}M_{i_{\beta}}=\Union_{\alpha<\theta}\Union_{\delta<\mu}M_{i_{\mu\alpha+\delta}}$.

For $\alpha<\theta$ and $\delta<\mu$, notice 
\begin{equation}\label{special equation}
M_{i_{\mu\alpha+\delta+1}}\text{ realizes every type over }M_{i_{\mu\alpha+\delta}}.
\end{equation}  
To see this take  $p\in\gaS(M_{i_{\mu\alpha+\delta}})$.  By our assumption that  $\cf(\sigma)\geq\kappa^*_\mu(\K)$, $p$ does not $\mu$-split over $M^\gamma_{i_{\mu\alpha+\delta}}$ for some $\gamma<\sigma$.  Therefore $(p,M^\gamma_{i_{\mu\alpha+\delta}})\in\St(M_{i_{\mu\alpha+\delta}})$.  By definition of relatively full towers, there is an $a_k$ with $i_{\mu\alpha+\delta}\leq k<i_{\mu\alpha+\delta+1}$ so that $(\tp(a_k/M_k),N_k)$ and $(p,M^\gamma_{i_{\mu\alpha+\delta}})$ are parallel.  Because $M_{i_{\mu\alpha+\delta}}\prec_{\K}M_k$, by the definition of parallel strong types, it must be the case that $a_k\models p$.

By a back and forth argument we can conclude from $(\ref{special equation})$ that $M_{i_{\mu\alpha+\mu}}$ is universal over $M_{i_{\mu\alpha}}$.  
Thus $M$ is a $(\mu,\theta)$-limit model.  

To see the details of the back-and-forth argument  mentioned in the previous paragraph, first translate $(\ref{special equation})$ to the terminology of \cite{Ba}: $(\ref{special equation})$ witnesses that  $\Union_{\beta<\mu}M_{i_{\mu\alpha+\beta}}$ is $1$-special over $M_{i_{\mu\alpha}}$.  Then, refer to the proof of Lemma 10.5 of \cite{Ba}.

\end{proof}

\section{Reduced Towers} \label{redtow-sec}

The proof of the uniqueness of limit models from \cite{sh394, gvv, Va1, Va1e} is two dimensional.  
In the context of towers, the relatively full towers are used to produce a $(\mu,\theta)$-limit model, but to conclude that this model is also a $(\mu,\omega)$-limit model, a $<$-increasing chain of $\omega$-many continuous towers of length $\theta+1$ is constructed.  We adapt this construction to prove Theorem \ref{uniqueness theorem}.  Instead of creating a chain of $\omega$-many towers, we produce a chain of $\delta$-many towers, and instead of each tower in this chain being continuous, we only require that these towers are continuous at limit ordinals of cofinality at least $\kappa^*_\mu(\K)$.

The use of towers should be compared with the proof of uniqueness of limit models in \cite[Section II.4]{shelahaecbook} (details are given in \cite[Section 9]{extendingframes}).  Both proofs create a `square' of models, but do so in a different way.  The proof here will proceed by starting with a 1-dimensional tower of models and then, in the induction step, extend this tower to fill out the square.  In contrast, the induction step of \cite[Lemma II.4.8]{shelahaecbook} adds single models at a time.  This seems like a minor distinction (or even just a difference in how the induction step is carried out), but there is a real distinction in the resulting squares.  In \cite{shelahaecbook}, the construction is `symmetric' in the sense that $\theta$ and $\delta$ are treated the same.  However, in the proof presented here, this symmetry is broken and one could `detect' which side of the square was laid out initially by observing where continuity fails.
  
      In  \cite{gvv, Va1, Va1e, Va2}, the continuity of the towers is achieved by restricting the construction to reduced towers, which under the stronger assumptions of  \cite{gvv, Va1, Va1e, Va2} are shown to be continuous.  We take this approach and notice that continuity of reduced towers at certain limit ordinals can be obtained with the weaker assumptions of Theorem \ref{uniqueness theorem}, in particular $\kappa^*_\mu(\K)<\mu^+$.

\begin{definition}\label{reduced defn}\index{reduced towers}
A tower $(\bar M,\bar a,\bar N)\in\K^*_{\mu,\alpha}$ is said to 
be \emph{reduced} provided that for every $(\bar M',\bar a,\bar
N)\in\K^*_{\mu,\alpha}$ with
$(\bar M,\bar a,\bar N)\leq(\bar M',\bar a,\bar
N)$ we have that for every
$i<\alpha$,
$$(*)_i\quad M'_i\cap\Union_{j<\alpha}M_j = M_i.$$
\end{definition}

The proofs of the following three results about reduced towers only require that the class $\K$ be stable in $\mu$ and that $\mu$-splitting satisfies the continuity property.  Although \cite{ShVi} works under stronger assumptions than we currently, none of these results use anything beyond Assumption \ref{assm}.  In particular, $\kappa^*_\mu(\K)=\omega$ holds in \cite{ShVi}, but is not used.

\begin{fact}[{\cite[Theorem 3.1.13]{ShVi}}]\label{density of
reduced}\index{reduced towers!density of} Let $\K$ satisfy Assumption \ref{assm}.  There exists a reduced
$<$-extension of every tower in
$\K^*_{\mu,\alpha}$.

\end{fact}

\begin{fact}[{\cite[Theorem 3.1.14]{ShVi}}]\label{union of reduced is reduced}
Let $\K$ satisfy Assumption \ref{assm}.  Suppose
$\langle (\bar M,\bar a,\bar N)^\gamma\in\K^*_{\mu,\alpha}\mid
\gamma<\beta\rangle$ is a $<$-increasing and continuous
sequence of
reduced towers such that the sequence is continuous in the sense that
for a limit $\gamma<\beta$, the tower $(\bar M,\bar a,\bar N)^\gamma$ is the union of the
towers $(\bar M,\bar a,\bar N)^\zeta$ for $\zeta<\gamma$.
Then the union of the sequence of towers $\langle (\bar M,\bar a,\bar N)^\gamma\in\K^*_{\mu,\alpha}\mid
\gamma<\beta\rangle$ is itself a
reduced tower.
\end{fact}

In fact the proof of Fact \ref{union of reduced is reduced} gives a slightly stronger result which allows us to take the union of an increasing chain of reduced towers of increasing index sets and conclude that the union is still reduced.

\begin{fact}[{\cite[Lemma 5.7]{gvv}}]\label{monotonicity}
Let $\K$ satisfy Assumption \ref{assm}. Suppose that $(\bar M,\bar a,\bar N)\in\K^*_{\mu,\alpha}$ is
reduced.  If $\beta<\alpha$, then $(\bar M,\bar
a,\bar N)\restriction \beta$ is reduced.
\end{fact}

The following theorem is related to \cite[Theorem 3]{Va2}, which additionally assumes that $\kappa^*_\mu(\K) = \omega$; in other words, it assumes that $\K$ is $\mu$-superstable.  Instead, we allow for strict stability (that is, $\kappa^*_\mu(\K)$ to be uncountable) at the cost of only guaranteeing continuity at limits of large cofinality.  In particular, the proof is similar to the proof of $(a)\to(b)$ in \cite[Theorem 3]{Va2}, but we crucially allow our towers to be discontinuous at $\gamma$ where $\cf(\gamma)<\kappa^*_\mu(\K)$.  At the good advice of the referee, we pull out the main construction--a tower amalgamation lemma, Lemma \ref{tow-ap-lem}--for future use.

\begin{theorem}\label{reduced are continuous}
Suppose $\K$  satisfies Assumption \ref{assm}.  
Let $\alpha$ be an ordinal and $\delta$ be a regular ordinal satisfying
$$\kappa^*_\mu(\K) \leq \delta < \alpha < \mu^+$$
If $\K$ satisfies $(\mu, \delta)$-symmetry for non-$\mu$-splitting, then all reduced towers in $\K^*_{\mu, \alpha}$ are continuous at $\delta$ (i.e., $M_\delta=\Union_{\beta<\delta}M_\beta$).
\end{theorem}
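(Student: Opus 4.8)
The plan is to argue by contradiction. Suppose the reduced tower $\T = (\bar M, \bar a, \bar N)$ is discontinuous at $\delta$, so that the model $M_\delta$ properly extends $M_\delta^- := \Union_{\beta<\delta} M_\beta$. Pick an element $b \in M_\delta \setminus M_\delta^-$. The first step is to locate a ``base'' inside the tower below $\delta$ over which $\tp(b/M_\delta^-)$ does not $\mu$-split: since $\cf(\delta) \geq \kappa^*_\mu(\K)$ and the chain $\langle M_\beta \mid \beta < \delta\rangle$ is increasing with each $M_{\beta+1}$ universal over $M_\beta$ (so in particular over $N_\beta$), I can apply the definition of $\kappa^*_\mu(\K)$ to the continuous closure of a suitable cofinal subchain to find some $\beta_0 < \delta$ such that $\tp(b/M_\delta^-)$ does not $\mu$-split over $M_{\beta_0}$. (One has to be slightly careful that the relevant subchain is continuous and that $M_{\beta_0}$ can be taken of the right form; this is the standard bookkeeping from the proof of \cite[Theorem 5]{Va2}, and it is where $\kappa^*_\mu(\K) \le \cf(\delta)$ is used, in place of the $\kappa^*_\mu(\K)=\omega$ of the original.)

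The second step is to set up the symmetry hypothesis. I want to build an extension $\T' = (\bar M', \bar a, \bar N) > \T$ so that, in the extended tower, $b$ is ``absorbed'' low down: specifically I aim to arrange that $b \in M'_{\beta_0}$, or more precisely that $b$ lies in $M'_j \cap \Union_{k<\alpha} M_k$ for some $j$ below $\delta$, which will contradict $(*)_j$ since $b \notin M_j$. To get there I feed the data into $(\mu,\delta)$-symmetry. Take $a := a_{\beta_0}$ (or, if that element does not have the right non-splitting base, first refine using monotonicity and the tower axioms), with base $N_{\beta_0}$; note $M_{\beta_0}$ is itself a $(\mu,\sigma)$-limit model, and for the symmetry instance I need the bottom model to be $(\mu,\delta)$-limit over $N_{\beta_0}$ — so I should actually locate a $(\mu,\delta)$-limit submodel $M_0$ with $N_{\beta_0} \prec M_0 \prec M_{\beta_0}$, again possible because $\cf(\delta) \ge \kappa^*_\mu(\K)$ lets us extract a cofinal $\delta$-subchain from the witnessing chain of $M_{\beta_0}$. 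With $a$ over $M_0$ non-$\mu$-splitting over $N_{\beta_0}$ and $\tp(b/M)$ non-$\mu$-splitting over $M_0$ (with $M$ a suitable universal extension, using the conclusion of the first step), $(\mu,\delta)$-symmetry produces a limit model $M^b$ over $M_0$ containing $b$ with $\tp(a/M^b)$ not $\mu$-splitting over $N_{\beta_0}$.

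The third step is to turn $M^b$ into the $(\beta_0+1)$-st model of an extending tower. Using amalgamation and Proposition \ref{successor stage prop} together with Proposition \ref{limit stage prop}, I build $\T' \ge \T$ indexed by the same set, agreeing with $\T$ on $\bar a$ and $\bar N$, in which $M'_{\beta_0+1}$ contains (an isomorphic copy fixing $M_{\beta_0}$ of) $M^b$, hence contains $b$; the non-splitting condition $\tp(a_{\beta_0}/M'_{\beta_0+1})$ does not $\mu$-split over $N_{\beta_0}$ is exactly what symmetry handed us, so $\T'$ is a legitimate tower. Since $b \in M'_{\beta_0+1} \cap \Union_{j<\alpha} M_j$ (as $b \in M_\delta$) but $b \notin M_{\beta_0+1}$, this violates $(*)_{\beta_0+1}$ in Definition \ref{reduced defn}, contradicting reducedness of $\T$. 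Hence no such $b$ exists and $M_\delta = \Union_{\beta<\delta} M_\beta$.

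The main obstacle I expect is the bookkeeping in the first two steps: one must manufacture, inside the given (possibly discontinuous) tower, a $(\mu,\delta)$-limit model sitting between $N_{\beta_0}$ and $M_{\beta_0}$ over which the relevant types behave well, and one must verify that the instance of $(\mu,\delta)$-symmetry is genuinely applicable (all four clauses of Definition \ref{mu-delta symmetry}, in particular non-algebraicity and the precise placement of the base $N$). Getting $\kappa^*_\mu(\K) \le \cf(\delta)$ to do the work that $\kappa^*_\mu(\K)=\omega$ did in \cite[Theorem 5]{Va2} — namely, extracting continuous cofinal subchains of the right order type and applying the local character definition to them — is the crux; the rest (Propositions \ref{successor stage prop} and \ref{limit stage prop}, and the contradiction with $(*)_i$) follows the template of \cite{Va2} essentially verbatim.
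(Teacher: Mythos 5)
Your overall template—argue by contradiction, locate a local-character base below $\delta$, invoke $(\mu,\delta)$-symmetry to produce $M^b$ containing $b$ with the non-splitting property, splice $M^b$ into an extension of the tower, and violate $(*)_i$—is the right shape and matches the strategy of the paper. But the first step contains the crucial gap, and it is precisely the point where the paper's proof does all its work.

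You write that you can ``apply the definition of $\kappa^*_\mu(\K)$ to the continuous closure of a suitable cofinal subchain'' of $\langle M_\beta \mid \beta<\delta\rangle$, citing that ``each $M_{\beta+1}$ is universal over $M_\beta$.'' That premise is false: a tower only requires $\bar M$ to be an increasing sequence of limit models with each $M_i$ universal over $N_i$, not that $M_{\beta+1}$ be universal over $M_\beta$. Hence $\Union_{\beta<\delta}M_\beta$ need not be a $(\mu,\delta)$-limit model, and the definition of $\kappa^*_\mu(\K)$ simply does not apply to this chain. Taking a ``continuous closure of a cofinal subchain'' does not repair this, because the obstruction is the missing universality at successors, not discontinuity. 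The paper's way around this is the entire auxiliary construction: it builds a $<$-increasing $\delta$-chain of reduced tower extensions $\T^i \supseteq \T\restriction\delta$ with the extra demand (\ref{limit at successor}) that $M^{i+1}_\beta$ be $(\mu,\delta)$-limit over $M^i_\beta$, so that the diagonal $\langle M^i_i \mid i<\delta\rangle$ genuinely witnesses that $M^\delta_\delta := \Union_{i,\beta<\delta}M^i_\beta$ is a $(\mu,\delta)$-limit model. Only then does $\kappa^*_\mu(\K) \le \cf(\delta)$ yield the desired $i^*$ with $\tp(b/M^\delta_\delta)$ not $\mu$-splitting over $M^{i^*}_{i^*}$; and the same construction hands you, for free, the fact that $M^{i^*}_{i^*}$ is $(\mu,\delta)$-limit over $N_{i^*}$, which you would otherwise have to manufacture separately for clause (1) of Definition \ref{mu-delta symmetry}. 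This construction also forces the case split ($b\in M^\delta_\delta$ or not) that your sketch omits, with the first case giving a quick contradiction and the second being the interesting one. Without identifying this auxiliary chain-of-towers (and the diagonal tower $\T^{diag}$ it produces, against which the directed system and the Propositions \ref{successor stage prop}, \ref{limit stage prop} are run), the argument cannot get off the ground.

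A secondary issue: your final step places $M^b$ at position $\beta_0+1$ of an extension $\T'$ of $\T$ ``indexed by the same set,'' but $M^b$ arises from symmetry as a limit over $M^{i^*}_{i^*}$, which need not even contain $M_{\beta_0+1}$; to make $M'_{\beta_0+1}$ a $\le$-extension of $M_{\beta_0+1}$ you still have to route through the directed system and a direct-limit argument (and in the paper the resulting $\mathring\T^\delta$ is a tower of length $\delta$ extending $\T^{diag}$, which is then lengthened by one model universal over $M_\delta$ to contradict reducedness of the original $\T\in\K^*_{\mu,\delta+1}$). That bookkeeping is more than routine and is worth spelling out, since the non-splitting bookkeeping (conditions (\ref{i* equation}), (\ref{limit non split eqn}), (\ref{grave f})) is what keeps $b$ fixed through the limit stages.
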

The proof is below, but first we prove the crucial lemma.  In it we are given a long base tower $\T^0$ and a shorter tower $\T^1$ that extends the initial segment of the base tower.  Crucially, we have an element $b$ that appears in $\T^1$ but not $\T^0$ and we want to lengthen and extend $\T^1$ to $\T^2$, which extends $\T^0$ (and contains $b$).  To do this, we have the additional assumption (Lemma \ref{tow-ap-lem}.(2)) that there are nice intermediate models $\bar{M}^+$ interleaved in the base tower.  This might seem artificial in the abstract, but is precisely given to us in the proof of the main theorem.

Throughout this lemma and following proof, we use the notation that if a tower is indicated by a superscript, then we use the same superscript to pick out the component models, e.g., when we write $\T^0$ for a tower, the $M^0_j$'s are used to denote the models in the corresponding $\bar{M}$ sequence.  Because a key idea is the continuity of towers, it will be useful to use the following notation for any sequence 
$\{M^x_i : i < \alpha\}$ (with `$x$' standing in for some superscript):
$$M^x_{<j} := \bigcup_{i < j} M^x_i$$
If $M^x_{<j} = M^x_j$ (for limit $j$), then the sequence is continuous at $j$.

\begin{lemma}[Tower amalgamation lemma]\label{tow-ap-lem}
Suppose $\K$ satisfies Assumption \ref{assm} and $(\mu, \beta)$-symmetry.  If $\alpha < \beta$ and we have towers $\T^0 \in \K^*_{\mu, \beta}$ and $\T^1 \in \K^*_{\mu, \alpha}$ such that
\begin{enumerate}
	\item $\T^0 \rest \alpha \leq \T^1$;
	\item for all $j < \beta$, there is a model $M^{+j}$ such that
	\begin{enumerate}
		\item $M^0_j \preck M^{+j} \preck M^0_{j+1}$;
		\item $M^{+j}$ is $(\mu, \beta)$-limit over $N^0_j$; and
		\item $\tp(a^0_j/M^{+j})$ does not $\mu$-split over $N^0_j$;
	\end{enumerate}
	\item there is $\gamma < \alpha$ and $b \in M_{<\alpha}^1$ such that
	\begin{enumerate}
		\item $b \not \in M^0_{<\beta}$; and
		\item $\tp(b/M^0_{<\beta})$ does not $\mu$-split over $M^0_\gamma$;
	\end{enumerate}
\end{enumerate}
then there is $\T^2 \in \K^*_{\mu, \beta}$ such that
\begin{enumerate}
	\item $\T^0 < \T^2$;
	\item there is $f:\T^1 \to \T^2 \rest \alpha$ such that $f$ is the identity on $M^0_{<\beta}$; and
	\item $b \in M^2_{<\beta}$.
\end{enumerate}
\end{lemma}

\begin{proof}
We are going to build a directed sequence of approximations to $\T^2$.  That is, we will build, for $\alpha \leq j \leq k \leq \beta$,
$$\T^{2, k} \in \K^*_{\mu, j} \text{ and } f_{j, k}:\T^{2,j} \to \T^{2, k}$$
such that, for all such $j \leq k$ from $[\alpha, \beta]$, we have
\begin{enumerate}
	\item \label{a} $\T^{2, \alpha} = \T^1$;
	\item \label{b} $\T^0\rest j \leq \T^{2, j}$;
	\item \label{c} $f_{j, k}\left(\T^{2, j}\right) \leq \T^{2, k} \rest j$;
	\item \label{d} $f_{j, k} \rest M^0_j$ is the identity;
	\item \label{e} $M^{2, j+1}_{j+1}$ is universal over $f_{j, j+1}\left(M^{2, j}_j\right)$;
	\item \label{f} $b \in M^{2, j}_{<\alpha}$; and
	\item \label{g} $\tp(f_{j, k}(b)/M^0_k)$ does not $\mu$-split over $M^0_\gamma$.
\end{enumerate}
This is enough by taking $\T^2 = \T^{2,\beta}$ and $f = f_{\alpha, \beta}$.  We now describe the construction based on induction on $k \in [\alpha, \beta]$.
\begin{enumerate}
	\item \underline{{\bf Case 1:}} $k = \alpha$\\
	
	Take $\T^{2, \alpha} = \T^1$ and $f_{\alpha, \alpha}$ to be the identity.\\
	
	\item \underline{{\bf Case 2:}} $k > \alpha$ is limit\\
	
	By induction, we have the directed system below $k$.  Note that, for all $j < k$,
	\begin{enumerate}
		\item $\T^0 \rest j \leq \T^{2, j}$ by (\ref{b});
		\item $f_{i, j} \rest M^0_i$ is the identity for $i < j$ by (\ref{d}); and
		\item $M^{2, j+1}_{j+1}$ is universal over $f_{j, j+1}(M^{2,j}_j)$.
	\end{enumerate}
	By Fact \ref{limit stage prop}, we can find a direct limit $\hat{\T}^k \in \K^*_{\mu, k}$ and $\hat{f}_{j, k}: \T^{2, j} \to \hat{\T}^k \rest j$ for all $j<k$ with the additional property that $\hat{f}_{j, k} \rest M^0_j$ is the identity and $\T^0\rest k \leq \hat{\T}^k$; note we decorate these symbols since they are \emph{not} the desired objects since we have to account for $b$.
	
	First, note that $\hat{f}_{j, k}(M^{2, j}_j)$ is universal over $M^0_j$ for all $j < k$ becasue $\hat{\T}^k$ extends $\T^0$ and this extension at the coordinate $j$ is proper.\\
	
	{\bf Claim:} The model $\hat{M}^k_{<k}$ is limit.
	\begin{proof}
	Condition (\ref{e}) gives us that $f_{j, j+1}:M^{2,j}_j \to M^{2, j+1}_{j+1}$ is a universal embedding.  So the direct limit is $(\mu, \cf k)$-limit and that limit is the desired model.
	\end{proof}
	
	{\bf Claim:} For $j \in [\alpha, k)$, $\tp\left(\hat{f}_{\alpha, k}(b)/M^0_j\right)$ does not $\mu$-split over $M^0_\gamma$.
	\begin{proof}
	Note that $\hat{f}_{\alpha, k} = \hat{f}_{j, k} \circ f_{\alpha, j}$.  By (\ref{g}), we know that $\tp\left(f_{\alpha, j}(b)/M^0_j\right)$ does not $\mu$-split over $M^0_\gamma$.  When we apply $\hat{f}_{j,k}$ to this and note that, by (\ref{d}) and $M^0_\gamma \preck M^0_j$, we get the desired result.
	\end{proof}
	Since  $M^0_{j+1}$ is universal over $M^0_j$, we can apply the continuity of non-$\mu$-splitting (Assumption \ref{assm}.(4)) to get
	$$\tp\left(\hat{f}_{\alpha, k}(b)/M^0_{<k} \right) \text{ does not $\mu$-split over }M^0_\gamma$$
	Whether this domain is $M^0_k$ depends on the continuity of the tower $\T^0$ at $k$. Since $\hat{f}_{\alpha, k}$ fixes $M^0_{\gamma+1}$, we have that
$$\tp(b/M^0_{\gamma+1}) = \tp(\hat{f}_{\alpha, k}(b) / M^0_{\gamma+1})$$
	By monotonicity of non-splitting and the Hypothesis 1.3.(b), we know that $\tp(b/M^0_{<k})$ does not $\mu$-split over $M^0_\gamma$.  By uniquessness of non-splitting extensions (see \cite[Theorem 12.7 and Exercise 12.8]{Ba}), we have that
	$$\tp(b/ M^0_{<k}) = \tp(\hat{f}_{\alpha, k}(b)/M^0_{<k})$$
	This gives an automorphism $g$ that fixes $M^0_{<k}$ and sends $\hat{f}_{\alpha, k}(b)$ to $b$.  We now define the desired objects 
	$$\T^{2, k} = g(\hat{\T}^k) \text{ and } f_{j, k} = g \circ \hat{f}_{j, k}$$
	
	We know want to show this construction works.
	\begin{enumerate}
		\item[\ref{b}.] $\hat{\T}^k \geq \T^0\rest k$ by construction and $g$ fixes $M^0_{<k}$, which are the models in $\T^0\rest k$.  Thus, $\T^0\rest k \leq g(\hat{\T}^k) = \T^{2, k}$.
		\item[\ref{c}.] By construction,
		$$\hat{f}_{j, k}(\T^{2, j}) \leq \hat{\T}^k \rest j$$
		Thus,
		$$f_{j, k}(\T^{2, j}) = g \circ \hat{f}_{j, k}(\T^{2, j}) \leq g(\hat{\T}^k \rest j) = \T^k\rest j$$
		\item[\ref{d}.] $g$ and $\hat{f}_{j,k}$ are both the identity on $M^0_j$, so $g\circ \hat{f}_{j,k}$ is too.
		\item[\ref{f}.] $b \in M_\gamma^{2, \alpha} \preck M_\alpha^{2, \alpha}$, so $g\left(\hat{f}_{\alpha, k}(b)\right) = b \in M_\gamma^{2, k}$.\\
	\end{enumerate}
		
	\item \underline{{\bf Case 3:}} $i = k+1$ where $k$ is limit\footnotei{WB: We removed the extraneous hypothesis about discontinuity}\\
	
	We have $\T^{2, k}$ and $f_{j, k}$ defined.  This is where we use the $M^{+j}$ models from clause 2 of the hypothesis.  By hypothesis 3.(b), we have that 
	$$\tp\left(b/M^0_{<\beta}\right) \text{ does not $\mu$-split over } M^0_\gamma$$
	Now we will apply $(\mu, \beta)$-symmetry to $(M^0_{<\beta}, M^{+K}, N^k, a^0_k, b)$ (recall Remark \ref{sym-rem}); see Figure \ref{fig:successor}.  Note that our hypothesis includes $M^0_{<\beta}$ is universal over $M^{+k}$ and $M^{+k}$ is $(\mu, \beta)$-limit over $N_k$; this is crucially the instances of symmetry we are assuming.  The symmetry gives us $M^b_k \in \K_\mu$ that is limit over $M^{+k}$ such that $\tp(a^0_k/M^b_k)$ does not $\mu$-split over $N_k$.  This model $M^b_k$ lives outside of the towers we have built so far, but we can find $M^* \in \K_\mu$ containing $M^{2, k}_{<k}$ and $M^{+k}$.  
	
	\begin{figure}[h]
\begin{tikzpicture}[rounded corners=5mm, scale=3,inner sep=.5mm]
\draw (0,1.25) rectangle (.75,.5);
\draw (.25,.75) node {$N_{k}$};
\draw (0,0) rectangle (3,1.25);
\draw (0,1.25) rectangle (1,0);
\draw (.8,.25) node {$M^{+k}$};
\draw (3.35, .25) node {$M^0_{<\beta}$};
\draw[color=gray] (0,1.25) rectangle (1.5, -.5);
\node at (1.1,-.25)[circle, fill, draw, label=45:$b$] {};
\node at (2,.75)[circle, fill, draw, label=45:$a_{k}$] {};
\draw[color=gray] (1.75,-.25) node {$M^{b}_{k}$};
\end{tikzpicture}
\caption{A diagram of the application of $(\mu,\delta)$-symmetry in the successor stage of the directed system construction in the proof of  Theorem \ref{reduced are continuous}. We have $\tp(b/ M^0_{<\beta})$ does not $\mu$-split over $M^{+k}$ and $\tp(a_{k}/M^{+k})$ does not $\mu$-split over $N_{k}$.  Symmetry implies the existence of  $M^b_k$ a limit model over $M^{+k}$. so that $\tp(a_{k}/M^b)$  does not $\mu$-split over $N_{k}$.} \label{fig:successor}
\end{figure}
	
	We know that
	$$M^0_k \preck {}_{, limit} M^{+k} \preck {}_{, limit} M^b_k$$
	so we can find $f:M^*\xrightarrow[M^0_k]{} M^{+k}$, which gives 
	$$f\left(M^{2,j}_{<k}\right) \preck{}_{, univ} M^b_k$$
	We have that $\tp(b/M^0_k)$ does not $\mu$-split over $M^0_\gamma$ and $f$ fixes $M^0_k$.  By the uniqueness of non-splitting extension and $M^0_k \preck{}_{univ} M^b_k$, we can find an automorphism $g$ fixing $M^0_k$ such that $g(f(b))=b$ and
	$$g\left(f(M^*)\right) \preck M^b_k$$
	Now, set $M^{2, k+1}_k$ to be an extension of $M^b_k$ that is universal over $M^0_k$ and set $f_{k, k+1} = g \circ f$; see Figure \ref{fig:reduced tower}.
	
	\begin{figure}[h]
\begin{tikzpicture}[scale=2.9,inner sep=.5mm]
\tikzstyle{rrect}=[rounded corners=5mm]
\draw (0,0) [rrect] rectangle (3.5,1);
\draw (0,1)[rrect] rectangle (1.1,0);
\draw (0,1)[rrect] rectangle (2.1,0);
\draw (0,1)[rrect] rectangle (2.55,0);
\draw (0,1)[rrect] rectangle (2.7,0);
\draw (.85,.25) node {$M^{0}_{\gamma+1}$};
\draw (1.75,.25) node {$M^0_{<k}$};
\draw (2.35,.25) node {$M^0_{k}$};
\draw (3,.25) node {$M^{+k}\dots$};
\draw (3.75,.25) node {$M^0_{<\beta}$};
\draw (-.25,.25) node {$\T^{0}$};
\node at (.5,-.25)[circle, fill, draw, label=270:$b$] {};
\draw[line width=.75mm] (0,1) [rrect] rectangle (2.1, -.45);
\draw[line width=.75mm] (0,1) [rrect] rectangle (1.1, -.45);
\draw (0,1) [rrect] rectangle (2.55, -.45);
\draw (2.4,-.6) node {$M*$};
\draw [->, shorten >=3pt] (2.5,-.3) to [bend right=65] node[pos=0.3,below] {$f$}(2.6,.1);
\draw [->, shorten >=3pt] (0,-.3) to [bend right=95] node[pos=0.6,below] {$g\circ f$}(1.5,-.75);
\draw (.85,-.2) node {$M^{2,k}_{\gamma+1}$};
\draw (1.75,-.2) node {$\dots M^{2, k}_{<k}$};
\draw (3.4,-.2) node {$M^b_k$};
\draw (-.25,-.2) node {$\T^{2,k}$};
\draw (-.25,-.75) node {$\T^{2,k+1}$};
\node at (2.95,.75)[circle, fill, draw, label=315:$a_{k}$] {};
\node at (1.2,.75)[circle, fill, draw, label=315:$a_{\gamma+1}$] {};
\begin{scope}
  \clip (0,1) [rrect] rectangle (5,-1);
\draw (0,1.8) [rrect, xslant=-0.4, dotted] rectangle (3.2, -.75); 
\end{scope}

\end{tikzpicture}
\caption{The construction of $\T^{2, k+1}$(dotted) from $\T^{2,k}$ (bold) with $g\circ f$ fixing $M^0_k$ and $b$.} \label{fig:reduced tower}
\end{figure}
	
	We can define the rest of the tower by, for $j<k$, 
	\begin{eqnarray*}
	M^{2, k+1}_j &:=& f_{k, k+1}(M^{2,k}_j)\\
	f_{j, k+1} &:=& f_{k, k+1}\circ f_{j, k}
	\end{eqnarray*}
	\item The other successor cases are unchanged, i.e., as in \cite[Theorem 3]{Va2}.
\end{enumerate}

\end{proof}

\begin{proof}[Proof of Theorem \ref{reduced are continuous}]
We work by contradiction; to that end, fix some $\delta < \alpha$ in $[\kappa^*_\mu(\K), \mu^+)$ such that $(\mu, \delta)$-symmetry holds for non-$\mu$-splitting and there is a reduced tower that is not continuous at $\delta$.  By Fact \ref{monotonicity}, this counter-example remains reduced if we restrict it to length $\delta+1$.  Thus, there is a reduced
$$\T = (\bar M,\bar a,\bar N) \in \K^*_{\mu, \delta+1}$$
that is discontinuous at $\delta$.  This discontinuity is witnessed by some element $b \in M_\delta - M_{<\delta}$.

We will use this discontinuity to contradict that $\T$ is reduced by finding $\mathring{\T} \in \K^*_{\mu, \delta}$ extending $\T^{diag}$ which extends $\T\rest \delta$ such that $\mathring{\T}$ contains $b$.

First, we build a series of extensions of $\T\rest \delta$.  Build $\T^i = (\bar{M}^i, \bar{a}^i, \bar{N}^i) \in \K^*_{\mu, \delta}$ for $i \leq \delta$ as a $<$-increasing chain of reduced towers as follows:
\begin{enumerate}
	\item $\T^0 = \T\rest \delta$.
	\item At limits, take unions; by Fact \ref{union of reduced is reduced}, this results in a reduced tower.
	\item Given $\T^i$, apply Fact \ref{density of reduced} $\delta$-many times to find reduced $\T^{i+1} > \T^i$ so that $M^{i+1}_\beta$ is $(\mu, \delta)$-limit over $M^i_\beta$ for all $\beta < \delta$.  Note that this implies that $M^{i+1}_\beta$ is $(\mu, \delta)$-limit over $N_\beta$.
\end{enumerate}

Set
$$M^* = \Union_{\beta < \delta} M^\delta_\beta = \Union_{i<\delta}  M^i_i$$
Figure \ref{fig:Mstar} is an illustration of these models.  $M^*$ contains all elements in any of the $\T^i$'s.  Note that $M^*$ is $(\mu, \delta)$-limit since $M^{i+1}_{i+1}$ is universal over $M_i^i$.\\

\begin{figure}[h]
\begin{tikzpicture}[rounded corners=5mm,scale =2.9,inner sep=.5mm]
\draw (0,1.5) rectangle (.75,.5);
\draw (0,1.5) rectangle (1.75,1);
\draw (.25,.75) node {$N_0$};
\draw (1.25,1.25) node {$N_\beta$};
\draw (0,0) rectangle (4,1.5);
\draw (0,1.5) rectangle (3.5,-2);
\draw (.85,.25) node {$M_0$};
\draw(1.25,.25) node {$M_1$};
\draw (1.75,.25) node {$\dots M_\beta$};
\draw (2.35,.25) node {$M_{\beta+1}$};
\draw (3.15,.2) node {$\dots\displaystyle{M_{<\delta}}$};
\draw (3.85, .25) node {$M_\delta$};
\draw (-.5,.25) node {$(\bar M,\bar a,\bar N)$};
\draw (0,1.5) rectangle (3.5, -.4);
\draw (.85,-.15) node {$M^{1}_0$};
\draw (1.75,-.15) node {$\dots M^{1}_\beta$};
\draw (2.35,-.15) node {$M^{1}_{\beta+1}$};
\draw(1.25,-.15) node {$M^{1}_1$};
\draw (3.15,-.2) node {$\dots\displaystyle{M^{1}_{<\delta}}$};
\draw (-.5,-.15) node {$(\bar M,\bar a,\bar N)^1$};
\draw (.85,-.6) node {$\vdots$};
\draw (1.75,-.6) node {$\vdots$};
\draw (2.35,-.6) node {$\vdots$};
\draw (3.2,-.6) node {$\vdots$};
\draw (0,1.5) rectangle (3.5, -1);
\draw (.85,-.85) node {$M^{i}_0$};
\draw (1.75,-.85) node {$\tiny{\dots} M^{i}_\beta$};
\draw (2.35,-.85) node {$M^{i}_{\beta+1}$};
\draw (3,-.85) node {$\dots M^{i}_{<\delta}$};
\draw (-.5,-.85) node {$(\bar M,\bar a,\bar N)^i$};
\draw (0,1.5) rectangle (3.5, -1.35);
\draw (0,1.5) rectangle (1,-2);
\draw(0,1.5) rectangle (1.5, -2);
\draw (0,1.5) rectangle (2.5, -2);
\draw (0,1.5) rectangle (2,-2);
\draw (0,1.5) rectangle (3.5, -2);
\draw (.8,-1.15) node {$M^{i+1}_0$};
\draw (1.8,-1.15) node {$ M^{i+1}_\beta$};
\draw (2.3,-1.15) node {$M^{i+1}_{\beta+1}$};
\draw (3,-1.2) node {$\dots M^{i+1}_{<\delta}$};
\draw (-.5,-1.15) node {$(\bar M,\bar a,\bar N)^{i+1}$};
\draw (.85,-1.6) node {$\vdots$};
\draw (1.75,-1.6) node {$\vdots$};
\draw (2.35,-1.6) node {$\vdots$};
\draw (3.2,-1.6) node {$\vdots$};
\node at (3.75,.75)[circle, fill, draw, label=90:$b$] {};
\node at (2.25,.75)[circle, fill, draw, label=290:$a_\beta$] {};
\node at (1.1,.75)[circle, fill, draw, label=290:$a_1$] {};
\draw (3.75,-1.75) node {$M^*$};
\end{tikzpicture}
\caption{$(\bar M,\bar a,\bar N)$ and the  towers $(\bar M,\bar a,\bar N)^i$ extending $(\bar M,\bar a,\bar N)\restriction\delta$ that don't contain $b$.} \label{fig:Mstar}
\end{figure}

{\bf Claim:} $b \not\in M^*$

\begin{proof}
For contradiction, assume that $b \in M^*$.  This means that there is some $\beta < \delta$ such that $b \in M^\delta_\beta$.  Note that $\T^\delta > \T\rest \delta$.  So we can find $M^\delta_\delta \succ M^*$ that is also a universal extension of $M_\delta$.  Then we have
$$\T \leq \T^\delta{}^\frown\langle M^\delta_\delta \rangle$$
Since $\T$ is reduced by assumption, we have
$$M^\delta_\beta \cap M_\delta = M_\beta$$
However, this is a contradiction because the left-hand side contains $b$ but the right-hand side does not.
\end{proof}

Thus, $\tp(b/M^*)$ is non-algebraic.  We are interested in the diagonal sequence $M^i_i$ for $i <\delta$, so set
$$\T^{diag} = (M_i^i, a_i, N_i)_{i < \delta} \in \K^*_{\mu, \delta}$$\\

{\bf Claim:} There is $i^* < \delta$ so $\tp(b/M^*)$ does not $\mu$-split over $M^{i^*}_{i^*}$.\\
\begin{proof}
First, we smooth the diagonal sequence to be continuous by setting 
$$\hat{M}_i  = \begin{cases} M^i_i & \text{ if $i$ is successor}\\  M^i_{<i} & \text{ if $i$ is limit}\end{cases}$$
Since $M^*$ is $(\mu, \delta)$-limit and $\cf \delta \geq \kappa^*_\mu(\K)$ (and by the monotonicity of non-splitting), there is a successor $i^* < \delta$ so
$$\tp(b/M^*) \text{ does not $\mu$-split over }\hat{M}_{i^*} = M^{i^*}_{i^*}$$
\end{proof}

Note that $\T\rest \delta \leq \T^{diag}$.  We are going to build a series of towers towards finding a contradiction.

First, we build $\T^b \in \K^*_{\mu, i^*+2}$ containing $b$ in the final model and that extends $\T^{diag} \rest (i^*+2)$.  To build this, note that $M^{i^*}_{i^*}$ is $(\mu, \delta)$-limit over $N_{i^*}$.

Now we invoke $(\mu, \delta)$-symmetry for $(M^*, M^{i^*}_{i^*}, N_{i^*}, a_{i^*}, b)$ (recall Remark \ref{sym-rem}).  This gives $M_b \in \K_\mu$ which is limit over $M^{i^*}_{i^*}$, contains $b$, and for which
$$\tp(a_{i^*}/M^b)\text{ does not $\mu$-split over }N_{i^*}$$
Now we can define 
$$\T^b := \T^{diag} \rest (i^*+1) {}^\frown \langle M^b \rangle$$
We now wish to apply Lemma \ref{tow-ap-lem}.  To establish the hypothesis of that lemma, note
\begin{enumerate}
    \item $\T^{diag} \rest (i^*+2) \leq \T^b$; and
    \item[3.] $b \in M^b_{<i^*+2} - M^{diag}_{<\delta}$
\end{enumerate}
Additionally, we can take $M^{j+1}_j$ for what is called $M^{+j}$ there as
\begin{enumerate}
    \item[(a)] $M^{diag}_j \preck M^{j+1}_j \preck M^{diag}_{j+1}$;
    \item[(b)] $M^{j+1}_j$ is $(\mu, \delta)$-limit over $N_j$; and
    \item[(c)] $\tp(a^{diag}_j/M^{j+1}_j)$ does not $\mu$-split over $N_j$.
\end{enumerate}

Thus, Lemma \ref{tow-ap-lem} gives us a tower $\mathring{\T} \in \K^*_{\mu, \delta}$ such that
\begin{enumerate}
    \item $\T^{diag} < \mathring{\T}$; and
    \item $b \in \mathring{M}_{<\delta}$ (in particular, there is $\gamma < \delta$ such that $b \in \mathring{M}_\gamma$).
\end{enumerate}
We do not need the other clause of the lemma.  Then we have that $\T\rest \delta \leq \T^{diag} < \mathring{\T}$.  We can extend this by finding $\mathring{M}_\delta$ to be an extension of $\mathring{M}_{<\delta}$ that is universal over $M_\delta$.  Then we have
$$\T \leq \mathring{\T}^\frown\langle\mathring{M}_\delta\rangle$$
Since $\T$ is reduced by assumption, we must have that
$$\mathring{M}_\gamma \cap M_\delta = M_\gamma$$
However, this is a contradiction since $b$ is in the left-hand side, but not the right-hand side.
\end{proof}

Although not used here, the converse of this theorem is also true, as in \cite{Va2}.  Note that the following does not have any assumption about $\kappa^*_\mu(\K)$.

\begin{proposition}
Suppose $\K$  satisfies Assumption \ref{assm}.(1), (2), and (4).  
Suppose further that that, for every reduced tower $(\bar M, \bar a, \bar M) \in \K_{\mu, \alpha}^*$, $\bar M$ is continuous at limit ordinals of cofinality $\delta$.  Then $\K$ satisfies $(\mu, \delta)$-symmetry for non $\mu$-splitting.
\end{proposition}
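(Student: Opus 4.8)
The plan is to argue the contrapositive: assuming $\K$ fails $(\mu,\delta)$-symmetry for non-$\mu$-splitting, I will produce a reduced tower that is discontinuous at an ordinal of cofinality $\delta$, contradicting the hypothesis. Since a model is a $(\mu,\delta)$-limit over $N$ precisely when it is a $(\mu,\cf(\delta))$-limit over $N$ (pass to a continuous cofinal subchain, resp.\ stretch the chain by inserting universal extensions), $(\mu,\delta)$-symmetry and $(\mu,\cf(\delta))$-symmetry are the same notion, so I may assume $\delta$ is regular and aim for discontinuity at $\delta$ itself. Fix a witness $(M,M_0,N,a,b)$ to the failure: conditions \ref{limit sym cond}--\ref{last} of Definition \ref{mu-delta symmetry} hold, yet there is no limit model $M^b$ over $M_0$ with $b\in M^b$ and $\tp(a/M^b)$ non-$\mu$-splitting over $N$.

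First I would build an initial tower $\T^{st}\in\K^*_{\mu,\delta+1}$ planting this configuration and discontinuous at $\delta$. A natural first attempt is to put $a$ at coordinate $0$: take the stage-$0$ model to be $M_0$ (which is a $(\mu,\delta)$-limit over $N$), $N_0:=N$, and $a_0:=a$; then condition \ref{a non-split} gives exactly that $\tp(a_0/M_0)$ is non-algebraic and does not $\mu$-split over $N_0$. For $0<i<\delta$, by a standard construction (building a chain of limit models ``around'' $b$) choose an increasing chain $\langle M_i^{st}\mid 0<i<\delta\rangle$ of limit models, each $M_{i+1}^{st}$ universal over $M_i^{st}$, with $a\in M_1^{st}$ and with $b\notin M_i^{st}$ while $\tp(b/M_i^{st})$ does not $\mu$-split over $M_0$ for all $i<\delta$; equip coordinate $i$ with a base $N_i^{st}\prec M_i^{st}$ over which $M_i^{st}$ is universal and a filler element $a_i^{st}$ realizing a non-$\mu$-splitting extension of some non-algebraic type over $N_i^{st}$ (stability yields non-$\mu$-splitting extensions of types over limit models, realized in the universal extensions). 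By Assumption \ref{assm}.(4), $\tp(b/\bigcup_{i<\delta}M_i^{st})$ does not $\mu$-split over $M_0$, while $b\notin\bigcup_{i<\delta}M_i^{st}$; take the stage-$\delta$ model $M_\delta^{st}$ to be any limit model containing $\bigcup_{i<\delta}M_i^{st}\cup\{b\}$. Then $b$ witnesses that $\T^{st}$ is discontinuous at $\delta$.

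Next I would apply Theorem \ref{density of reduced} to obtain a reduced $<$-extension $\T=(\bar M,\bar a,\bar N)$ of $\T^{st}$; note $a_0=a$, $N_0=N$, and $M_0\preceq M_0^{\T}$ with $M_0^{\T}$ still a $(\mu,\delta)$-limit over $N$ and $\tp(a/M_0^{\T})$ non-$\mu$-splitting over $N$. If $\T$ is discontinuous at $\delta$ we are done, since $\T$ is reduced and $\cf(\delta)=\delta$. So suppose $\T$ is continuous at $\delta$; I claim this forces a contradiction with our choice of $(M,M_0,N,a,b)$, by running the square-and-diagonal construction from the proof of Theorem \ref{reduced are continuous} \emph{in reverse}. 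Continuity places $b\in\bigcup_{i<\delta}M_i$. Starting from $\T\restriction\delta$ and using Proposition \ref{successor stage prop} and Proposition \ref{limit stage prop}, I would build a directed system of partial extensions anchored at the coordinate carrying $a$, using continuity of non-$\mu$-splitting (Assumption \ref{assm}.(4)) together with the uniqueness of non-$\mu$-splitting extensions (\cite[Theorem I.4.12]{Va1}) at the limit stages to transport $b$ down into the stage-$0$ column while keeping the relevant type of $a$ non-$\mu$-splitting over $N$; the direct limit then yields a limit model $M^b$ over $M_0$ (a limit model universal over $M_0$, hence a limit over $M_0$) with $b\in M^b$ and $\tp(a/M^b)$ not $\mu$-splitting over $N$, contradicting our choice of witness. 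Hence $\T$ is discontinuous at $\delta$, contradicting the hypothesis that reduced towers are continuous at ordinals of cofinality $\delta$.

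The main obstacle is precisely this reverse-extraction step. In Theorem \ref{reduced are continuous} a symmetry instance is \emph{consumed} in order to absorb the discontinuity witness into a continuous reduced extension; here the implication runs the other way, so one must lay out $\T^{st}$ — and in particular position the coordinate carrying $a$ relative to the discontinuity witness — so that the mere presence of $b$ inside a continuous reduced extension \emph{delivers} a symmetry instance. Verifying this, especially that the directed-system bookkeeping at the limit stages where the diagonal tower may be discontinuous goes through using only continuity and uniqueness of non-$\mu$-splitting (no local-character hypothesis, since Assumption \ref{assm}.(3) is unavailable here), is the heart of the argument; it is the dual of the corresponding construction in \cite{Va2}.
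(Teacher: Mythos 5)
Your high-level strategy --- argue the contrapositive, build a tower of length $\delta+1$ planting the failure-of-symmetry witness with $b$ present only at the top coordinate, pass to a reduced $<$-extension via Theorem \ref{density of reduced}, and then invoke the continuity hypothesis --- does match the shape of the argument the paper delegates to \cite[Theorem 5.$(b)\to(a)$]{Va2}, as does your identification of the two adaptations the paper flags (length $\delta+1$, and requiring the relevant limit models to be $(\mu,\delta)$-limits). The reduction to regular $\delta$ is also fine. However, the proposal stops exactly at the step that carries the weight, and the filling you sketch does not look sound.

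Concretely: with $a_0 = a$ you have $a\in M_1^{st}\subseteq M_j'$ for every $j\geq 1$ of the reduced extension $\T$, so $\tp(a/M_j')$ is algebraic for $j>0$, and the tower structure gives you control of $\tp(a_j/M_j')$ over $N_j$, not of $\tp(a/M_j')$ over $N$. Continuity of $\T$ at $\delta$ only puts $b$ in $M_j'$ for \emph{some} unspecified $j<\delta$; the one coordinate at which you have a usable non-algebraic, non-$\mu$-splitting statement about $a$ is $j=0$, and nothing forces $b\in M_0'$. Your plan to ``transport $b$ down into the stage-$0$ column'' by running the directed-system construction of Theorem \ref{reduced are continuous} in reverse is exactly where the argument breaks: in that construction each successor stage \emph{applies} an instance of $(\mu,\delta)$-symmetry to push the discontinuity witness down the tower while preserving the relevant non-$\mu$-splitting; continuity of non-splitting and uniqueness of non-splitting extensions are used alongside symmetry there, not as a substitute for it. Here symmetry is unavailable by assumption (its failure is your standing hypothesis), so there is nothing to drive the successor step, and Propositions \ref{successor stage prop} and \ref{limit stage prop} alone will not produce the needed $M^b$ with $b\in M^b$ and $\tp(a/M^b)$ non-$\mu$-splitting over $N$. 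You acknowledge that ``verifying this is the heart of the argument,'' but the proposal does not supply that verification, and I do not see how it can be made to go through with the tower laid out as you have it.

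The argument the paper cites from \cite{Va2} arranges the initial tower differently so that the coordinate where $b$ lands already carries the non-splitting data needed to recover a symmetry instance directly (in particular the free element of Definition \ref{mu-delta symmetry} is not absorbed into the tower the way your choice $a_0=a$ forces, and the bases $N_i$ are tied to $N$). Without an arrangement of that kind, the final step of your proof is a genuine gap rather than a routine check.
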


\begin{proof}
The proof is an easy adaptation of \cite[Theorem 3.$(b)\to(a)$]{Va2}.  The same argument works; the only adaptations are to require that every limit model to in fact be a $(\mu, \delta)$ limit model and that the tower $\T$ be of length $\delta+1$\footnote{In a happy coincidence, the notation in that proof already agrees with this change.}.
\end{proof}

\section{Uniqueness of Long Limit Models} \label{ulm-sec}
We now begin the proof Theorem \ref{uniqueness theorem}, which we restate here.
\begin{theorem1}
Suppose that $\K$ is an abstract elementary class satisfying Assumption \ref{assm}.
 For $\theta$ and $\delta$ limit ordinals $<\mu^+$ both with cofinality $\geq\kappa^*_\mu(\K)$,
if $\K$ satisfies symmetry for non-$\mu$-splitting (or just $(\mu,\delta)$-symmetry), then, for any $M_1$ and $M_2$ that are $(\mu,\theta)$ and $(\mu,\delta)$-limit models over $M_0$, respectively, we have that $M_1$ and $M_2$ are isomorphic over $M_0$.

\end{theorem1}
The structure of the proof of Theorem \ref{uniqueness theorem} from this point on is similar to the proof in \cite[Theorem 1.9]{gvv}.  For completeness we include the details here, and emphasize the points of departure from \cite[Theorem 1.9]{gvv}.

We construct an array of models which will produce a model that is both a $(\mu,\theta)$- and a $(\mu,\delta)$-limit model.
Let $\theta$ be an ordinal as in the definition of relatively full tower so that $\cf(\theta)\geq\kappa^*_\mu(\K)$ and let $\delta=\kappa^*_\mu(\K)$.  The
goal is to build an array of models with $\delta+1$ rows so that the
bottom row of the array is a relatively full tower indexed by a set of cofinality $\theta+1$ continuous at $\theta$. 
 To do this, we will be
adding elements to the index set of towers row by row so that 
at stage $n$ of our construction the
tower that we build is indexed by $I_n$
 described here.

The index sets $I_\beta$ will be defined inductively so that $\langle I_\beta\mid
\beta<\delta+1\rangle$ is an increasing and continuous chain of well-ordered sets. We
fix $I_0$ to be an index set of order type $\theta+1$ and will denote it by
$\langle i_\alpha\mid\alpha\leq\theta\rangle$.  We will refer to the members of $I_0$ by name in many
stages of the construction.  These indices serve as anchors for the
members of the remaining index sets in the array.  Next we demand that
for each $\beta<\delta$, $\{j\in I_\beta\mid i_\alpha<j<i_{\alpha+1}\}$ has order type $\mu\cdot \beta$ such
that each $I_\beta$ has supremum $i_\theta$. An example of such $\langle I_\beta\mid \beta\leq\delta\rangle$
is $I_\beta=\theta\times(\mu\cdot \beta)\Union\{i_\theta\}$ ordered lexicographically, where $i_\theta$ is
an element $\geq$  each $i\in \Union_{\beta<\delta}I_\beta$. Also, let $I=\bigcup_{\beta<\delta}I_\beta$.

To prove Theorem \ref{uniqueness theorem}, we need to prove that, for a
fixed $M\in\K$ of cardinality $\mu$, any $(\mu,\theta)$-limit and $(\mu,\delta)$-limit model
over $M$ are isomorphic over $M$.  Since all $(\mu, \theta)$-limits over $M$ are isomorphic over $M$ (and the same holds for $(\mu, \delta)$-limits), it is enough to construct a single model that is simultaneously $(\mu, \theta)$-limit and $(\mu, \delta)$-limit over $M$.  Let  us begin by fixing a limit model $M\in\K_\mu$.
We define, by induction on $\beta\leq\delta$, a $<$-increasing and continuous
sequence of towers $(\bar M,\bar a,\bar N)^\beta$ such that
\begin{enumerate}
\item $\T^0:=(\bar M,\bar a,\bar N)^0$ is a tower with $M^0_0=M$.
\item $\T^\beta:=(\bar M,\bar a,\bar N)^\beta\in\K^*_{\mu,I_\beta}$.
\item\label{realizing types} For every $(p,N)\in\St(M^\beta_i)$ with $i_\alpha\leq i< i_{\alpha+1}$ there is 
$j\in I_{\beta+1}$ with $i_\alpha< j<i_{\alpha+1}$ so that $(\tp(a_j/M^{\beta+1}_j),N^{\beta+1}_j)$ and $(p,N)$ 
are parallel.  

\end{enumerate}
See Figure \ref{fig:arrayconstruction}.

\begin{figure}[h]
\begin{tikzpicture}[rounded corners=5mm,scale =2.9,inner sep=.5mm]
\draw (0,1.5) rectangle (.75,.5);
\draw (0,1.5) rectangle (1.75,1);
\draw (.25,.75) node {$N_{i_0}$};
\draw (1.25,1.25) node {$N_{i_\alpha}$};
\draw (0,0) rectangle (3.7,1.5);
\draw (0,1.5) rectangle (3.7,-2);
\draw (.85,.25) node {$M^0_{i_0}$};
\draw(1.25,.25) node {$M^0_{i_1}$};
\draw (1.75,.25) node {$\dots M^0_{i_\alpha}$};
\draw (2.35,.25) node {$M^0_{i_{\alpha+1}}$};
\draw (3.15,.2) node {$\dots\substack{M^0_{i_\theta}=\\ \Union_{k<\theta}M^0_{i_k}}$};
\draw (-.5,.25) node {$\T^0\in\K^*_{\mu,I_0}$};
\draw (0,1.5) rectangle (3.7, -.5);
\draw (.85,-.15) node {$M^{1}_{i_0}$};
\draw (1.75,-.15) node {$\dots M^{1}_{i_\alpha}$};
\draw (2.3,-.15) node {$\lll M^{1}_{i_{\alpha+1}}$};
\draw(1.25,-.15) node {$\lll M^{1}_{i_1}$};
\draw (3.15,-.2) node {$\dots\substack{ \runiv \\ M^1_{i_\theta}=\\ \Union_{k<\theta}M^{1}_{i_k}}$};
\draw (-.5,-.15) node {$\T^1\in\K^*_{\mu,I_1}$};
\draw (.85,-.6) node {$\vdots$};
\draw (1.75,-.6) node {$\vdots$};
\draw (2.35,-.6) node {$\vdots$};
\draw (3.2,-.6) node {$\vdots$};
\draw (0,1.5) rectangle (3.7, -1);
\draw (.85,-.85) node {$M^{\beta}_{i_0}$};
\draw (1.75,-.85) node {$\tiny{\dots} M^{\beta}_{i_\alpha}$};
\draw (2.35,-.85) node {$M^{\beta}_{i_{\alpha+1}}$};
\draw (3,-.85) node {$\dots \substack{M^\beta_{i_\theta}=\\\Union_{k<\theta}M^{\beta}_{i_k}}$};
\draw (-.5,-.85) node {$\T^\beta\in\K^*_{\mu,I_\beta}$};
\draw (0,1.5) rectangle (3.7, -1.45);
\draw (0,1.5) rectangle (1,-2);
\draw(0,1.5) rectangle (1.5, -2);
\draw (0,1.5) rectangle (2.6, -2);
\draw (0,1.5) rectangle (2,-2);
\draw (0,1.5) rectangle (3.7, -2);
\draw (.8,-1.15) node {$M^{\beta+1}_{i_0}$};
\draw (1.8,-1.15) node {$ M^{\beta+1}_{i_\alpha}$};
\draw (2.3,-1.15) node {$\lll M^{\beta+1}_{i_{\alpha+1}}$};
\draw (3,-1.2) node {$\dots\substack{\runiv\\ M^{\beta+1}_{i_\theta}= \\\Union_{k<\theta}M^{\beta+1}_{i_k}}$};
\draw (-.5,-1.15) node {$\T^{\beta+1}\in\K^*_{\mu,I_{\beta+1}}$};
\draw (-.5, -1.8) node {$\T^{\delta}\in\K^*_{\mu,I_{\delta}}$};
\draw (.85,-1.55) node {$\vdots$};
\draw (1.75,-1.55) node {$\vdots$};
\draw (2.35,-1.55) node {$\vdots$};
\draw (3.2,-1.55) node {$\vdots$};
\draw (.8,-1.8) node {$M^{\delta}_{i_0}$};
\draw (1.8,-1.8) node {$ M^{\delta}_{i_\alpha}$};
\draw (2.3,-1.8) node {$\prec_{u} M^{\delta}_{i_{\alpha+1}}$};
\node at (2.1,.75)[circle, fill, draw, label=290:$a_{i_\alpha}$] {};
\node at (1.1,.75)[circle, fill, draw, label=290:$a_{i_1}$] {};
\draw (3.1,-1.8) node {$M^\delta_{i_\theta}=\displaystyle{\Union_{\gamma<\delta, k<\theta}M^\gamma_{i_k}}$};
\end{tikzpicture}
\caption{The chain of length $\delta$ of towers of increasing index sets $I_j$ of cofinality $\theta+1$.  The symbol $\lll$ indicates that there are $\mu$ many new indices between $i_\beta$ and $i_{\beta+1}$ in $I_{j+1}\backslash I_j$.  The elements indexed by these indices realize all the strong types over the model $M^j_{i_\alpha}$.  The notation $\prec_{u}$ is an abbreviation for a universal extension.} \label{fig:arrayconstruction}
\end{figure}

Given $M$, we can find a tower $(\bar M,\bar a,\bar N)^0\in\K^*_{\mu,I_0}$
with $M\preceq_{\K}M^0_0$ because of the existence of universal extensions and
because   $\kappa^*_\mu(\K)<\mu^+$.  At successor stages we first take an extension of $(\bar M,\bar a,\bar N)^\beta$ indexed by $I_{\beta+1}$ and realizing all the strong types over the models in 
  $(\bar M,\bar a,\bar N)^\beta$.  This tower may not be reduced, but by Fact \ref{density of reduced}, it has a reduced extension.  At limit stages take unions of the chain of towers defined so far.  
  
  Notice that by Fact \ref{union of reduced is reduced}, the tower $\T^\delta$ formed by the union of all the $(\bar M,\bar a,\bar N)^\beta$ is reduced.  Furthermore, by Theorem \ref{reduced are continuous} every one of the reduced towers $\T^j$ is continuous at $\theta$ because $\cf(\theta)\geq\kappa^*_\mu(\K)$.  Therefore $M^\delta_{i_\theta}=\Union_{k<\theta}M^\delta_{i_k}$,
and by the definition of the ordering $<$ on towers, the last model in this tower ($M^\delta_{i_\theta}$) is a $(\mu,\delta)$-limit model witnessed by $\langle M^j_{i_\theta}\mid j<\delta\rangle$.  Since $M^1_{i_\theta}$ is universal over $M$, we have that $M^\delta_{i_\theta}$ is $(\mu, \delta)$-limit over $M$.

 Next to see that $M^\delta_{i_\theta}$ is also a $(\mu,\theta)$-limit model, notice that $\T^\delta$ is relatively full by condition \ref{realizing types} of the construction and the same argument as \cite[Claim 5.11]{gvv}.
Therefore by 
  Theorem \ref{reduced are continuous} and our choice of $\delta$ with $\cf(\delta)\geq\kappa^*_\mu(\K)$, the last model $M^\delta_{i_\theta}$ in this relatively full tower is a $(\mu,\theta)$-limit model over $M$.

This completes the proof of Theorem \ref{uniqueness theorem}.

\bibliographystyle{asl}
\bibliography{bounded-kappa-12_25_2022.bib}

\end{document}